\def\eps{{\varepsilon}}
\def\O{\Omega}
\def\R{\mathbb{R}}
\def\A{\mathcal{A}}
\def\S{\mathcal{S}}
\def\M{\mathcal{M}}
\def\la{\lambda}
\def\eps{\varepsilon}
\def\pa{\partial}
\newcommand{\cp}{\mathrm{cap}}
\newcommand{\be}{\begin{equation}}
\newcommand{\ee}{\end{equation}}
\newcommand{\bib}[4]{\bibitem{#1}{\sc#2: }{\it#3. }{#4.}}
\numberwithin{equation}{section}
\theoremstyle{plain}
\newtheorem{theo}{Theorem}[section]
\newtheorem{lemm}[theo]{Lemma}
\newtheorem{prop}[theo]{Proposition}
\newtheorem{defi}[theo]{Definition}
\theoremstyle{definition}
\newtheorem{rema}[theo]{Remark}
\title[]{On a reverse Kohler-Jobin inequality}
\author[L. Briani]{Luca Briani}
\author[G. Buttazzo]{Giuseppe Buttazzo}
\author[S. Guarino Lo Bianco]{Serena Guarino Lo Bianco}
\date{}
\begin{document}

\begin{abstract}
We consider the shape optimization problems for the quantities $\lambda(\O)T^q(\O)$, where $\O$ varies among open sets of $\R^d$ with a prescribed Lebesgue measure. While the characterization of the infimum is completely clear, the same does not happen for the maximization in the case $q>1$. We prove that for $q$ large enough a maximizing domain exists among quasi-open sets and that the ball is optimal among {\it nearly spherical domains}.
\end{abstract}

\maketitle

\textbf{Keywords:} torsional rigidity; shape optimization; principal eigenvalue; capacitary measures.

\textbf{2020 Mathematics Subject Classification:} 49Q10, 49J45, 49R05, 35P15, 35J25

%%%%%%%%%%%%%%%%%%%%%%%%%%%%%%%%%%%%%%%%%%%%%%%%%%
\section{Introduction}\label{sintro}

In the present paper we consider two well-known quantities that occur in the study of elliptic equations in the Euclidean space $\R^d$, $d\ge2$. The first one is usually called {\it torsional rigidity} and is defined, for every nonempty open set $\O\subset\R^d$ with finite Lebesgue measure (in the following a {\it domain}), as
$$T(\O)=\int w_\O\,dx,$$
where $w_\O$ is the unique solution of the PDE
$$-\Delta u=1\quad\hbox{in }\O,\qquad u\in H^1_0(\O).$$
Equivalently, we may define $T(\O)$ as
$$T(\O)=\max\left\{\Big[\int u\,dx\Big]^2\Big[\int|\nabla u|^2\,dx\Big]^{-1}\ :\ u\in H^1_0(\O)\setminus\{0\}\right\}.$$
In the integrals above and in the following we use the convention that integrals without the indicated domain are intended over the entire space $\R^d$. The quantity $T(\O)$ verifies the scaling property
$$T(t\O)=t^{d+2}T(\O)\qquad\text{for every }t>0;$$
in addition, the maximum of $T(\O)$ among domains with prescribed measure is reached by the ball ({\it Saint Venant inequality}), which can be written in the scaling free formulation as
$$|\O|^{-(d+2)/d}T(\O)\ge|B|^{-(d+2)/d}T(B),$$
for every domain $\O$ and for every ball $B\subset \R^d$. 

The second quantity is the {\it first eigenvalue} $\lambda(\O)$ of the Dirichlet Laplacian, defined as the smallest $\lambda$ such that the PDE
$$-\Delta u=\lambda u\quad\hbox{in }\O,\qquad u\in H^1_0(\O)$$
admits a nonzero solution. Equivalently, $\lambda(\O)$ can be defined through the minimization of the Rayleigh quotient
$$\lambda(\O)=\min\left\{\Big[\int|\nabla u|^2\,dx\Big]\Big[\int u\,dx\Big]^{-2}\ :\ u\in H^1_0(\O)\setminus\{0\}\right\}.$$
The quantity $\lambda(\O)$ verifies the scaling property
$$\lambda(t\O)=t^{-2}\lambda(\O)\qquad\text{for every }t>0;$$
in addition, the minimum of $\lambda(\O)$ among domains with prescribed measure is reached by the ball ({\it Faber-Krahn inequality}), which can be written in the scaling free formulation as
$$|\O|^{2/d}\lambda(\O)\ge|B|^{2/d}\lambda(B),$$
for every domain $\O$ and for every ball $B\subset \R^d$.

The study of relations between $T(\O)$ and $\lambda(\O)$ was performed in several papers (see for instance \cite{BBP21}, \cite{BBV}, \cite{bfnt16}, \cite{bra14}, \cite{BBP22}, \cite{bgm17}, \cite{BP21}, \cite{dpga14}, \cite{kj78a}, \cite{kj78b}, \cite{luzu19}), where some important inequalities were established. In particular:
\begin{itemize}
\item[-]the Kohler-Jobin inequality
$$\lambda(\O)T^q(\O)\ge\lambda(B)T^q(B),$$
valid for every $q\in[0,2/(d+2)]$ and for every domain $\O$, where $B$ is any ball in $\R^d$ with $|B|=|\O|$;

\item[-]the P\'olya inequality
$$0<\frac{\lambda(\O)T(\O)}{|\O|}<1,$$
valid for every domain $\O$ of $\R^d$.
\end{itemize}

In the present paper we consider the scaling free shape functional
$$F_q(\O)=\frac{\lambda(\O)T^q(\O)}{|\O|^{\alpha_q}}, \quad\quad \text{ with } \alpha_q=\frac{-2+q(d+2)}{d},$$
and the two quantities
$$\begin{cases}
m_q=\inf\big\{F_q(\O)\ :\ \O\text{ domain}\big\};\\
M_q=\sup\big\{F_q(\O)\ :\ \O\text{ domain}\big\}.
\end{cases}$$
While the situation for $m_q$ is fully clear, and by Kohler-Jobin inequality, together with the Saint Venant inequality, we have
$$m_q=\begin{cases}
F_q(B)&\text{if }q\le2/(d+2)\\
0&\text{if }q>2/(d+2),
\end{cases}$$
the characterization of $M_q$ is not yet complete. The results available up to now are (see \cite{BBP21} and \cite{bfnt16}):
\begin{itemize}
\item[]$M_q=\infty$ for every $q<1$;
\item[]$M_q=1$ when $q=1$, with the upper bound $1$ not reached by any domain $\O$;
\item[]$M_q<\infty$ for every $q>1$.
\end{itemize}
We investigate here this last case. The maximal expectation would be having the following result (reverse Kohler-Jobin inequality):
\begin{itemize}
\item[-]for every $q>1$ the supremum $M_q$ is reached on an optimal domain $\O_q$;
\item[-]there exists a threshold $q^*>1$ such that for every $q\ge q^*$ the supremum $M_q$ is reached by a ball.
\end{itemize}
We are unable to prove the results in the strong form above, and we prove here the weaker results below:
\begin{itemize}
\item[-]for every $q>1$ the supremum $M_q$ is reached on a capacitary measure $\mu_q$ (Theorem \ref{existence});
\item[-]there exists a threshold $q_0>1$ such that for every $q\ge q_0$ the supremum $M_q$ is reached by a domain $\O_q$ (Theorem \ref{extDomain});
\item[-]there exists another threshold $q_1$ such that for every $q\ge q_1$ the ball is a maximizer for the shape functional $F_q$ among {\it nearly spherical domains} (Theorem \ref{maxBall}).
\end{itemize}

While finishing this paper we have been informed that similar problems are considered in the work in progress \cite{BLNP}.

%%%%%%%%%%%%%%%%%%%%%%%%%%%%%%%%%%%%%%%%%%%%%%%%%%
\section{Capacitary measures}\label{scap}

The concept of capacitary measure and the related properties is a very useful tool for our purposes. When dealing with sequences of PDEs of the form
$$-\Delta u=f\quad\hbox{in }\O_n,\qquad u\in H^1_0(\O_n),$$
a natural question is to establish if the sequence $u_{n,f}$ of solutions, or a subsequence of it, converges in $L^2$ to some function $u_f$ and to determine in this case the PDE that the function $u_f$ solves. Starting from the pioneering papers \cite{dm88}, \cite{dmmo87} is now well understood that the right framework to treat such a kind of questions is that of capacitary measures. Below we recall the main results and definitions following \cite{BBV14} and \cite{V15}. For further information we refer the reader to the monographs \cite{BB05}, \cite{HP18} and references therein.

\begin{defi}
We say that a nonnegative Borel regular measure $\mu$, possibly taking the value $\infty$, is a capacitary measure if
$$\mu(E)=0\text{ whenever $E$ is a Borel set with }\cp(E)=0,$$
being $\cp(E)$ the capacity
$$\cp(E)=\inf\Big\{\int_{\R^d}|\nabla u|^2+u^2\,dx\ :\ u\in H^1_0(\R^d),\ u=1\text{ in a neighborhood of }E\Big\}.$$
\end{defi}

A property $P(x)$ is said to hold quasi-everywhere (briefly q.e.) if the set where $P(x)$ does not hold has zero capacity. 
A Borel set $\O\subset\R^d$ is said to be quasi-open if there exists a function $u\in H^1(\R^d)$ such that $\O=\{u>0\}$ up to a set of capacity zero. A function $f:\R^d\to \R$ is said to be quasi-continuous if there is a sequence of open sets $\omega_n\subset\R^d$ such that $\lim_{n\to\infty}\cp(\omega_n)=0$ and $f$ is continuous when restricted to $\R^d\setminus \omega_n$. It is well known (see for instance \cite{EvGa}) that every Sobolev function has a quasi-continuous representative, and that two quasi-continuous representatives coincide quasi-everywhere. We then identify the space $H^{1}(\R^d)$ with the space of quasi-continuous representatives. We recall that a sequence $u_n\in H^{1}(\R^d)$ that converges in norm to some $u\in H^{1}(\R^d)$, converges quasi-everywhere (up to a subsequence) to $u$. 

Given $\mu$ a capacitary measure we denote by $H^1_\mu$ the following space
\[
H^1_{\mu}=H^1(\R^d)\cap L^{2}_\mu(\R^d) = \left\{u\in H^1(\R^d)\, :\, \int u^2\,d\mu<\infty\right\}.
\]
The space $H^1_\mu$ is an Hilbert space when endowed with 
$\|u\|_{H^{1}_\mu}=\|u\|_{H^1(\R^d)}+\|u\|_{L^2_\mu(\R^d)}$, where the quantity $\|u\|_{L^2_\mu(\R^d)}$ is well defined, being Sobolev functions defined up to a set of zero capacity.
We always identify two capacitary measures $\mu,\nu$ for which
\be\label{identifymeasure}
\int u^2d\mu=\int u^2d\nu, \hbox{ for every } u\in H^{1}(\R^d).
\ee
If instead \eqref{identifymeasure} holds with ``$\le$'' we say that $\mu\le\nu$, and in this case we have $H^1_\nu\subseteq H^1_\mu$. We can associate to any open set (or more generally to any quasi-open set) $\O\subset\R^d$ the capacitary measure $I_\O$ defined as follows
$$I_\O(E):=
\begin{cases}
0&\hbox{ if }\cp(E\setminus\O)=0,\\
\infty&\hbox{ if }\cp(E\setminus\O)>0.
\end{cases}$$
Notice that, if $\mu=I_\O$ for some open set $\O\subset\R^d$, then $H^{1}_{\mu}=H^{1}_0(\O)$. 

To extend the notion of torsional rigidity to a capacitary measure $\mu$ we need to carefully deal with the fact that the embedding $H^1_\mu\hookrightarrow L^1(\R^d)$ can be noncompact and even noncontinuous. Nevertheless we can follow an approximation argument: for every $R>0$, let $w_R$ be the solution to the following minimization problem
$$
\min\left\{\int|\nabla u|^2\,dx+\int u^2\,d\mu-\int u\,dx\ :\ u\in H^{1}_\mu\cap H^{1}_0(B_R)\right\}
$$
The torsion function $w_\mu$ and the torsional rigidity $T(\mu)$ of the capacitary measure $\mu$ are defined as:
$$
w_{\mu}:=\sup_{R>0}w_R, \quad T(\mu):=\int w_{\mu}dx.
$$
The Dirichlet eigenvalue of $\mu$ can be defined through the following Rayleigh-type quotient:
\[
\lambda_1(\mu)= \inf_{u\subset H^1_\mu\setminus\{ 0\}} \frac{\int|\nabla u|^2\,dx+\int u^2\, d\mu}{\int u^2\, dx}.
\]
Clearly, if $\mu=I_\O$ for some domain $\O\subset\R^d$, we have $T(\mu)=T(\O)$ and $\la(\mu)=\la(\O)$ (we adopt this notation also if $\O$ is a quasi-open set). For a general capacitary measure $\mu$, neither $\la(\mu)$ is necessarily attained by some function $u\in H^{1}_\mu$ nor $T(\mu)$ is necessarily finite. However, as shown in \cite{BB12}, it holds the following:
$$
w_\mu\in L^{1}(\R^d)\Longleftrightarrow T(\mu)<\infty\Longrightarrow \la_1(\mu) \hbox{ is attained by some }u \in H^{1}_\mu.
$$
For every capacitary measure $\mu$ with $T(\mu)<\infty$ we define the {\it set of finiteness} $A_\mu$ as the quasi-open set
$$A_\mu:=\{w_{\mu}>0\}.$$
In the case when $\mu=I_\O$, for some domain $\O\subset\R^d$, we have $A_\mu=\O$. The set of capacitary measures with finite torsion can be endowed with the following notion of distance. 

\begin{defi}
Given two capacitary measures $\mu,\nu$ such that $w_\mu,w_\nu\in L^{1}(\R^d)$ we define the $\gamma-$distance between them as $d_\gamma(\mu,\nu)=\|w_\mu-w_\nu\|_{L^1(\R^d)}$. We say that a sequence $\mu_n$ $\gamma-$converges to $\mu$ if $d_\gamma(\mu_n,\mu)\to 0$ as $n\to\infty$. When $I_{\O_n}\overset{\gamma}\to\mu$ we simply write $\O_n\overset{\gamma}\to\mu$.
\end{defi}

We summarize the main properties of the $\gamma-$distance below:
\begin{itemize}
\item The space $(\{\mu:\ \mu \hbox{ capacitary measure with }w_\mu\in L^{1}(\R^d)\},d_{\gamma})$ is a complete metric space and the set $\{I_\O:\ \O\subset\R^d \hbox{ open set with }w_\O\in L^{1}(\R^d)\}$ is a dense subset of it.
\item The functionals $\mu\mapsto\lambda(\mu)$ and $\mu\mapsto T(\mu)$ are $\gamma-$continuous.
\item The map $\mu\mapsto|A_\mu|$, or more generally integral functionals as $\int_{A_\mu}f(x)\,dx$ with $f\ge0$ and measurable, are lower semicontinuous with respect to the $\gamma$-convergence.
\item The $\gamma$-convergence of $\mu_n$ to $\mu$ implies the $\Gamma$-convergence in $L^2(\R^d)$ of the functionals $\|\cdot\|_{H^{1}_{\mu_n}}:L^2(\R^d)\to L^{2}(\R^d)$ defined by
$$\|u\|_{H^{1}_\mu}=
\begin{cases}
\|u\|_{H^{1}(\R^d)}+\int u^2\,d\mu_n\ & \hbox{if }u\in H^{1}_{\mu_n}\\ 
\infty & \hbox{if } u\not\in H^{1}
\end{cases}
$$
to the functional $\|\cdot\|_{H^{1}_\mu}:L^2(\R^d)\to L^{2}(\R^d)$ ,
$$\|u\|_{H^{1}_\mu}=
\begin{cases}
\|u\|_{H^{1}(\R^d)}+\int u^2\,d\mu\ & \hbox{if }u\in H^{1}_\mu
\\ \infty & \hbox{if } u\not\in H^{1}.
\end{cases}
$$
\item For a given capacitary measures $\mu$ with finite torsion we call resolvent of $\mu$ the linear compact and self-adjoint operator
$$R_\mu:L^{2}(\R^d)\to L^{2}(\R^d),\quad R_\mu(f)=w_{\mu,f},$$
where $w_{\mu,f}$ is the solution of the problem 
$$
w_{\mu,f}\in H^{1}_\mu,\quad -\Delta w_{\mu,f}+w_{\mu,f}\mu =f, 
$$
in the sense that
$$
w_{\mu,f}\in H^1_\mu,\quad \int \nabla w_{\mu,f}\cdot \nabla \phi dx+\int w_{\mu,f}\phi d\mu=\int f\phi dx \ \hbox{ for every }\phi\in H^{1}_\mu.
$$
The $\gamma$-convergence of $\mu_n$ to $\mu$ implies the norm convergence of $R_{\mu_n}$ to $R_{\mu}$, i.e.
$$
\lim_{n\to\infty}\|R_{\mu_n}-R_{\mu}\|_{\mathcal{L}(L^{2}(\R^d),L^{2}(\R^d))}=0.
$$
\item If $\mu_n$ is a sequence of capacitary measures whose set of finiteness have uniformly bounded measures $|A_{\mu_n}|$, then
$$\mu_n\overset{\gamma}{\to}\mu\Longleftrightarrow \|R_{\mu_n}-R_{\mu}\|_{\mathcal{L}(L^{2}(\R^d),L^2(\R^d))}\to 0\Longleftrightarrow \|u\|_{H^{1}_{\mu_n}}\overset{\Gamma}
{\longrightarrow}\|u\|_{H^{1}_{\mu}}\hbox{ on }L^2(\R^d).$$
\end{itemize}

The classical concentration-compactness principle of P.L. Lions was extended to sequences of open sets in \cite{B00}. Notably, the following result holds.
\begin{comment} Notably, defined the infimum of two capacitary measures $\mu,\nu$ with disjoint sets of finiteness as
$$\mu\wedge\nu(E)=
\begin{cases}
\mu(A_\mu\cap E)+\nu(A_\nu\cap E)&\hbox{if }\cp(E\setminus(A_\mu\cup A_\nu))=0,\\
\infty&\hbox{if }\cp(E\setminus(A_\mu\cup A_\nu))>0,
\end{cases}$$
it holds the following.
\end{comment}

\begin{theo}\label{Bucurconcomp}
Let $\O_n$ be a sequence of open sets with uniformly bounded measures. Then there exists a subsequence (still denoted with the same indices $n$) such that one of the following situations occurs.
\begin{itemize}
\item[-]Compactness: there exists a sequence $x_n\subset\R^d$ such that the sequence of capacitary measures $I_{\O_n}(x_n+\cdot)$ $\gamma-$converges. 
\item[-]Vanishing: the sequence $R_{I_{\O_n}}$ converges in norm to $0$. Moreover we have $\|w_{\O_n}\|_{L^{\infty}}\to 0$ and $\lambda(\O_n)\to\infty$, as $n\to\infty$.
\item[-]Dichotomy: there exist two sequences of quasi-open sets $\O^1_{n},\O_n^2\subset\O_n$ such that
\begin{itemize}
\item[-]$\mathrm{dist}(\O^1_n,\O^2_{n})\to \infty$, as $n\to\infty$;
\item[-]$d_{\gamma}(I_{\O_m},I_{\O^1_n\cup \O^2_n})\to 0$, as $n\to\infty$;
\item[-]$\liminf_{n\to\infty}T(\O^1_n)>0$ and $\liminf_{n\to\infty} T(\O^2_n)>0$.
\end{itemize}
\end{itemize}
\end{theo}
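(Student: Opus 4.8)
\textbf{Proof plan for Theorem \ref{Bucurconcomp} (concentration--compactness alternative).}

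The plan is to adapt Lions' concentration--compactness principle to the $\gamma$-convergence setting, using the torsion functions $w_{\O_n}$ as the basic objects on which the concentration function is built. After a translation we may assume all $\O_n$ are contained in a common ball only if the compactness case holds, so the argument must be intrinsic; the key quantity is the concentration function
\be\label{concfn}
Q_n(r)=\sup_{x\in\R^d}\int_{B_r(x)}w_{\O_n}\,dx,
\ee
which is nondecreasing in $r$, bounded above by $T(\O_n)$, and (since $\|w_{\O_n}\|_{L^1}=T(\O_n)$ is bounded by the uniform measure bound together with the P\'olya-type estimate $T(\O)\le|\O|^{(d+2)/d}/T(B)^{-1}\cdot\text{const}$, or simply by $T(\O_n)\le C|\O_n|^{1+2/d}$) uniformly bounded. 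First I would, passing to a subsequence, let $Q_n\to Q$ pointwise with $Q$ nondecreasing, set $\ell=\lim_{r\to\infty}Q(r)=\sup_r Q(r)\in[0,L]$ where $L=\limsup_n T(\O_n)$, and split into the three classical cases $\ell=0$, $\ell=L$, $0<\ell<L$.

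In the vanishing case $\ell=0$: I would show $\|w_{\O_n}\|_{L^\infty}\to0$. This follows because $w_{\O_n}$ solves $-\Delta w_{\O_n}=1$ on $\O_n$, so pointwise $w_{\O_n}(x)\le C\big(\int_{B_1(x)}w_{\O_n}\,dx\big)^{\theta}$ for a suitable exponent $\theta$ by a De Giorgi--Nash--Moser / mean-value type bound applied to the subsolution $w_{\O_n}$ of $-\Delta w=1$ (equivalently, $w_{\O_n}+ |x|^2/(2d)$ is subharmonic), hence $\|w_{\O_n}\|_{L^\infty}\le C\,Q_n(1)^{\theta}\to0$. From $\|w_{\O_n}\|_{L^\infty}\to0$ and the variational characterization $T(\O)=\int w_\O$, $\lambda(\O)\ge 1/\|w_\O\|_{L^\infty}$ (indeed $w_\O\le \|w_\O\|_\infty$ gives, testing the eigenvalue quotient appropriately, or using $w_\O$ itself, the bound $\lambda(\O)\|w_\O\|_{L^\infty}\ge1$), we get $\lambda(\O_n)\to\infty$. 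Finally the norm convergence $R_{I_{\O_n}}\to0$ in $\mathcal L(L^2,L^2)$ follows from $\|R_{\O_n}f\|_{L^\infty}\le\|w_{\O_n}\|_{L^\infty}\|f\|_{L^1}$-type estimates combined with the spectral bound $\|R_{\O_n}\|_{\mathcal L(L^2)}=1/\lambda(\O_n)\to0$.

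In the compactness case $\ell=L$: for each $\eps>0$ choose $r_\eps$ with $Q(r_\eps)>L-\eps$, pick $x_n$ realizing (almost) the sup in \eqref{concfn} at $r=r_\eps$, and show that after translating by $x_n$ the mass of $w_{\O_n}$ outside a fixed large ball is small uniformly in $n$; a diagonal argument over $\eps=1/k$ produces a single sequence $x_n$ such that $w_{\O_n}(x_n+\cdot)$ is tight in $L^1$. Tightness plus the uniform bounds $\|w_{\O_n}\|_{H^1}\le C$ (from $\int|\nabla w_{\O_n}|^2=\int w_{\O_n}=T(\O_n)\le C$) give, up to subsequence, $w_{\O_n}(x_n+\cdot)\to w$ strongly in $L^1$; one then identifies the limit as $w_\mu$ for the capacitary measure $\mu$ obtained as the $\gamma$-limit (using completeness of $(\,\cdot\,,d_\gamma)$ from the bulleted properties, together with the fact that $d_\gamma(I_{\O_n}(x_n+\cdot),I_{\O_m}(x_m+\cdot))=\|w_{\O_n}(x_n+\cdot)-w_{\O_m}(x_m+\cdot)\|_{L^1}\to0$, i.e. the sequence is $d_\gamma$-Cauchy). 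This gives the stated $\gamma$-convergence.

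In the dichotomy case $0<\ell<L$: choose $r_k\to\infty$ and $x_k=x_{n_k}$ with $\int_{B_{r_k}(x_k)}w_{\O_{n_k}}\to\ell$, while for radii $\rho_k\gg r_k$ the mass in the annulus $B_{\rho_k}(x_k)\setminus B_{r_k}(x_k)$ tends to $0$; set $\O^1_n=\O_n\cap B_{r_k}(x_k)$ and $\O^2_n=\O_n\setminus \overline{B_{\rho_k}(x_k)}$ (quasi-open), so that $\dist(\O^1_n,\O^2_n)\ge\rho_k-r_k\to\infty$. The delicate point, and the one I expect to be the main obstacle, is the middle bullet $d_\gamma(I_{\O_n},I_{\O^1_n\cup\O^2_n})\to0$, i.e. controlling the torsion function on the full set by that on the truncated pieces: one must show $\|w_{\O_n}-w_{\O^1_n\cup\O^2_n}\|_{L^1}\to0$. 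This requires a cut-off argument on $w_{\O_n}$ with a function transitioning from $1$ to $0$ across the annulus, estimating the Dirichlet energy of the cut-off piece by $\int_{\text{annulus}}|\nabla w_{\O_n}|^2$ (small by a Caccioppoli inequality once the annular $L^1$-mass of $w_{\O_n}$ is small, since $\int_{B_\rho\setminus B_{2\rho}}|\nabla w|^2\le C\rho^{-2}\int_{B_{2\rho}\setminus B_{\rho/2}}w^2+\,\text{lower order}$, and $w^2\le\|w\|_\infty w$ with $\|w_{\O_n}\|_\infty\le C$), and then comparing energies via the minimality of $w_{\O_n}$. Finally $\liminf T(\O^i_n)>0$: for $i=1$ this is immediate since $\int w_{\O^1_n}\ge \int_{B_{r_k}(x_k)}w_{\O_n}-o(1)\to\ell>0$; for $i=2$ one uses $T(\O^2_n)\ge T(\O_n)-T(\O^1_n)-o(1)\to L-\ell>0$, using that the $d_\gamma$-closeness from the middle bullet and disjointness of the two pieces make the torsions additive in the limit (the torsion of a disjoint union is the sum of torsions). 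The remaining items are then routine.
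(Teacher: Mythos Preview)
The paper does not actually prove this theorem; after the statement it simply says that the result ``can be deduced by combining Theorem 2.2 of \cite{B00} and Theorem 3.5 of \cite{BBV14}.'' Your proposal, by contrast, gives a self-contained sketch following Lions' concentration--compactness scheme applied to the torsion functions $w_{\O_n}$, which is precisely the strategy underlying those cited references. The concentration function $Q_n(r)=\sup_{x}\int_{B_r(x)}w_{\O_n}$ is the right object, the three-case split is standard, and you correctly identify the cut-off/Caccioppoli estimate on the annulus as the technical core of the dichotomy branch; the additivity of torsion for well-separated pieces then gives the lower bounds on $T(\O^i_n)$ as you indicate. So your approach is sound and is essentially what the cited papers do.

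Two small points worth tightening. In the vanishing case, the inequality $w(x)\le C\big(\int_{B_1(x)}w\big)^\theta$ is not quite the correct form; your parenthetical observation that $w_{\O_n}+|x|^2/(2d)$ is subharmonic is the right route, yielding $w(x_0)\le|B_r|^{-1}\int_{B_r(x_0)}w+Cr^2$, after which one first sends $n\to\infty$ (using $Q_n(r)\to Q(r)=0$) and then $r\to0$. In the compactness case you should pass to a further subsequence along which $T(\O_n)\to L$ before arguing tightness, otherwise the condition $\ell=L$ does not by itself exclude mass escaping. Neither issue is a genuine gap.
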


The proof of the theorem above can be deduced by combining Theorem 2.2 of \cite{B00} and Theorem 3.5 of \cite{BBV14}.

%%%%%%%%%%%%%%%%%%%%%%%%%%%%%%%%%%%%%%%%%%%%%%%%%%
\section{Relaxation of $F_q$}\label{srel}

In this section we characterize the relaxation of the functional $F_q$ to the set of capacitary measures. We define the set $\M_{ad}$ of admissible capacitary measures as
$$
\M_{ad}=\{\mu\ :\ \mu \hbox{ capacitary measure with }\ 0<|A_\mu|<\infty\}.
$$
For $\mu\in\M_{ad}$ we define the relaxed form of our functional $F_q$ as
$$F_q(\mu)=\sup\Big\{\limsup_n F_q(\O_n)\ :\ \O_n\subset\R^d \hbox{ open set such that } \O_n\overset{\gamma}{\to}\mu\Big\},$$
so that 
\[M_q=\sup\{F_q(\mu) \,: \, \mu\in\M_{ad} \}.\]

\begin{lemm}\label{amu}
Let $\mu\in \M_{ad}$ and $\O_n$ a sequence of domains such that $\O_n\overset{\gamma}{\to}\mu$. If $|A_\mu|< \infty$ then $\O_n\cap A_\mu\overset{\gamma}{\to}\mu$.
\end{lemm}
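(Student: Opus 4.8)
The plan is to deduce the $\gamma$-convergence from the convergence of the torsional energies $T(\O_n\cap A_\mu)\to T(\mu)$, and to obtain the latter from a direct test-function estimate. (Recall that $\O_n\overset{\gamma}{\to}\mu$ means $\|w_{\O_n}-w_\mu\|_{L^1}=d_\gamma(I_{\O_n},\mu)\to0$, and that $w_\mu\in L^1(\R^d)$, i.e. $T(\mu)<\infty$.)

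First I would reduce the statement to showing $\liminf_n T(\O_n\cap A_\mu)\ge T(\mu)$. Since $\O_n\cap A_\mu\subseteq\O_n$, the comparison of torsion functions gives $0\le w_{\O_n\cap A_\mu}\le w_{\O_n}$ q.e.\ (in particular $w_{\O_n\cap A_\mu}\in L^1(\R^d)$), whence
\[ d_\gamma(I_{\O_n\cap A_\mu},\mu)=\int(w_{\O_n\cap A_\mu}-w_\mu)^+ +\int(w_\mu-w_{\O_n\cap A_\mu})^+ . \]
The first term is at most $\int(w_{\O_n}-w_\mu)^+\le d_\gamma(I_{\O_n},\mu)\to0$; the second equals $\int(w_{\O_n\cap A_\mu}-w_\mu)^+ +\big(T(\mu)-T(\O_n\cap A_\mu)\big)$; and $T(\O_n\cap A_\mu)=\int w_{\O_n\cap A_\mu}\le\int w_{\O_n}=T(\O_n)\to T(\mu)$ already gives $\limsup_n T(\O_n\cap A_\mu)\le T(\mu)$. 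Thus $d_\gamma(I_{\O_n\cap A_\mu},\mu)\to0$ as soon as the reverse inequality $\liminf_n T(\O_n\cap A_\mu)\ge T(\mu)$ holds.

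For the lower bound I would take as test function $v_n:=w_{\O_n}\wedge w_\mu$. It lies in $H^1(\R^d)\cap L^1(\R^d)$, and since $v_n\le w_{\O_n}$ and $v_n\le w_\mu$ its quasi-continuous representative vanishes q.e.\ outside $\O_n$ and outside $A_\mu=\{w_\mu>0\}$; as $\O_n\cap A_\mu$ is quasi-open (indeed $\O_n\cap A_\mu=\{v_n>0\}$ q.e.), this means $v_n\in H^1_0(\O_n\cap A_\mu)$, so that $T(\O_n\cap A_\mu)\ge(\int v_n)^2/\int|\nabla v_n|^2$. From $v_n=w_\mu-(w_\mu-w_{\O_n})^+$ and $(w_\mu-w_{\O_n})^+\le|w_\mu-w_{\O_n}|$ we get $\int v_n=T(\mu)-\int(w_\mu-w_{\O_n})^+\to T(\mu)$. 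For the denominator, testing $\int\nabla w_{\O_n}\cdot\nabla\varphi=\int\varphi$ ($\varphi\in H^1_0(\O_n)$) with $\varphi=v_n$ and splitting $\R^d$ according to the sign of $w_{\O_n}-w_\mu$, one rearranges into the identity
\[ \int|\nabla v_n|^2=\int v_n-\int\nabla w_\mu\cdot\nabla h_n,\qquad h_n:=(w_{\O_n}-w_\mu)^+ . \]
Now $\|h_n\|_{L^1}\le\|w_{\O_n}-w_\mu\|_{L^1}\to0$ while $\|\nabla h_n\|_{L^2}\le\|\nabla w_{\O_n}\|_{L^2}+\|\nabla w_\mu\|_{L^2}=\sqrt{T(\O_n)}+\sqrt{\int|\nabla w_\mu|^2}$ stays bounded; by the Gagliardo–Nirenberg interpolation $\|h_n\|_{L^2}\le C\,\|\nabla h_n\|_{L^2}^{\theta}\|h_n\|_{L^1}^{1-\theta}$ (with $\theta=d/(d+2)$) we also get $h_n\to0$ in $L^2(\R^d)$, hence $h_n\rightharpoonup0$ weakly in $H^1(\R^d)$ and $\int\nabla w_\mu\cdot\nabla h_n\to0$. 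Therefore $\int|\nabla v_n|^2\to T(\mu)$ and $\liminf_n T(\O_n\cap A_\mu)\ge\lim_n(\int v_n)^2/\int|\nabla v_n|^2=T(\mu)$.

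The only genuinely delicate point is the last step: the naive bounds control $h_n=(w_{\O_n}-w_\mu)^+$ only in $L^1$ and its gradient only in $L^2$, so one must upgrade the $L^1$-smallness to weak $H^1$-convergence (via Gagliardo–Nirenberg) in order to make the cross term $\int\nabla w_\mu\cdot\nabla h_n$ disappear — this is what forces $\int|\nabla v_n|^2\to T(\mu)$ rather than merely $\limsup\int|\nabla v_n|^2\le 2T(\mu)$. The remaining ingredients (the comparison $w_{\O_n\cap A_\mu}\le w_{\O_n}$, the membership $v_n\in H^1_0(\O_n\cap A_\mu)$ via the description of $H^1_0$ of a quasi-open set, the variational bound $T(U)\ge(\int v)^2/\int|\nabla v|^2$, and the bookkeeping with positive and negative parts) are routine.
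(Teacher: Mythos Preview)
Your argument is correct and takes a genuinely different route from the paper. The paper invokes the equivalence (valid because $|\O_n\cap A_\mu|\le|A_\mu|<\infty$) between $\gamma$-convergence and $\Gamma$-convergence of the norms $\|\cdot\|_{H^1_{\mu_n}}$ on $L^2(\R^d)$, and then proves the $\Gamma$-limsup inequality for an \emph{arbitrary} $u\in H^1_\mu$: starting from a recovery sequence $u_n\in H^1_0(\O_n)$ for $u$, it builds $v_n^{\pm}:=u_n^{\pm}\wedge u^{\pm}\in H^1_0(\O_n\cap A_\mu)$ and shows $\limsup_n\int|\nabla(v_n^+ - v_n^-)|^2\le\int|\nabla u|^2+\int u^2\,d\mu$ via weak lower semicontinuity of $\int|\nabla(u_n^+\vee u^+)|^2$. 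You instead stay at the level of the metric $d_\gamma$ itself and reduce everything to the single scalar statement $T(\O_n\cap A_\mu)\to T(\mu)$; your truncation $v_n=w_{\O_n}\wedge w_\mu$ is the same idea specialised to the torsion function, and your identity $\int|\nabla v_n|^2=\int v_n-\int\nabla w_\mu\cdot\nabla h_n$ together with the Gagliardo--Nirenberg upgrade $h_n\to0$ in $L^1$ $\Rightarrow$ $h_n\rightharpoonup0$ in $H^1$ replaces the paper's lower-semicontinuity step. Your approach is more self-contained (no appeal to the $\gamma$/$\Gamma$ equivalence or to the uniform measure bound), at the price of the interpolation ingredient; the paper's approach is more structural and in fact yields the full $\Gamma$-convergence of the energies, not just convergence of the torsion, which can be reused elsewhere.
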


\begin{proof}
Being the sequence $\O_n\cap A_\mu$ of uniformly bounded measure, by the properties of $\gamma$-convergence seen above we have to show that
$$\|u\|_{H^{1}_{\mu_n}}\overset{\Gamma}
{\longrightarrow}\|u\|_{H^{1}_{\mu}}\hbox{ on }L^2(\R^d),$$
where we set $\mu_n=I_{\O_n\cap A_\mu}$.

The ``$\Gamma$-liminf'' inequality readily follows by the fact that $H^{1}_{\mu_n}=H^{1}_0(\O_n\cap A_{\mu})\subseteq H^{1}_0(\O_n)$ and by the $\Gamma$ convergence of $\|\cdot\|_{H^{1}_0(\O_n)}$ to $\|\cdot\|_{H^{1}_\mu}$ in $L^{2}(\R^d)$. 

To prove the ``$\Gamma$-limsup'' inequality we can suppose without loss of generality that $u\in H^{1}_{\mu}$.
Since $\O_n\overset{\gamma}{\to}\mu$, there exists a sequence $u_n\in H^1_0(\O_n)$ such that
\[
\begin{split}
&u_n\longrightarrow u\hbox{ strongly }L^2(\R^d),\\
&\lim_{n\to\infty}\left(\int |\nabla u_n|^2dx\right)= \int |\nabla u|^2dx+\int |u|^2 d\mu.
\end{split}
\]
We denote respectively by $u_n^+$ and $u_n^-$ the positive and negative part of $u_n$. 
Since we have
$$
\int |\nabla (u_n^+-u_n^-)|^2dx=\int|\nabla u_n^+|^2dx+\int|\nabla u_n^-|^2dx,
$$
and $u_n=u_n^+-u_n^-$, by possibly passing to a subsequence (still indexed by $n$) we can suppose that
\be\label{utile}
\begin{split}
&\limsup_{n\to\infty}\left(\int|\nabla u_n^+|^2dx\right)+\limsup_{n\to\infty}\left(\int|\nabla u_n^-|^2dx\right)=\lim_{n\to\infty}\left(\int |\nabla (u_n^+-u_n^-)|^2dx\right)\\
&=\int |\nabla u|^2dx+\int u^2 d\mu.
\end{split}
\ee
We define 
$$v_n^+=u_n^+\wedge u^+\in H^1(\R^d),\qquad v_n^-=u_n^-\wedge u^-\in H^{1}(\R^d).$$
Since $u\in H^{1}_\mu$ and $u_n\in H^{1}_0(\O_n)$ we have $u=0$ q.e. on $A_\mu^c$ and $u_n=0$ q.e. on $\O_n^c$.
This implies that both $v_n^+$ and $v_n^-$ vanish q.e. on $(\O_n\cap A_\mu)^c$ and consequently that $v_n^+, v_n^-\in H^{1}_0(\O_n\cap A_\mu)$. Moreover it is easy to show that
$$v_n^+-v_n^-\longrightarrow u, \quad \hbox{strongly } L^{2}(\R^d).$$ 
Therefore the thesis is achieved if we show that
\be\label{goal}
\limsup_{n\to\infty}\left(\int |\nabla (v_n^+-v_n^-)|^2\,dx\right)\le \lim_{n\to\infty}\left(\int |\nabla (u_n^+-u_n^-)|^2\,dx\right).
\ee
We have
\be\label{prima}
\begin{split}
\int|\nabla v^+_n|^2\,dx&=\int_{\{u^+_n\le u^+\}}|\nabla u^+_n|^2\,dx+\int_{\{u^+_n>u^+\}}|\nabla u^+|^2\,dx\\
&=\int|\nabla u^+_n|^2\,dx-\int\Big(|\nabla u^+_n|^2-|\nabla u|^2\Big)1_{\{u^+_n>u^+\}}\,dx.
\end{split}
\ee
By lower semicontinuity we have
\be\label{seconda}
\liminf_n\int\Big(|\nabla u^+_n|^2-|\nabla u^+|^2\Big)1_{\{u^+_n>u^+\}}\,dx\ge0.
\ee
Indeed, to show the inequality above, it is enough to write
\[\begin{split}
\int\Big(|\nabla u^+_n|^2-|\nabla u^+|^2\Big)1_{\{u^+_n>u^+\}}\,dx
&=\int\Big(|\nabla u^+_n|^2-|\nabla u^+|^2\Big)1_{\{u^+_n\ge u^+\}}\,dx\\
&=\int|\nabla(u^+_n\vee u^+)|^2-|\nabla u^+|^2\,dx
\end{split}\]
and to notice that $u^+_n\rightharpoonup u^+$ weakly in $H^1(\R^d)$ implies $u^+_n\vee u^+\rightharpoonup u^+$ weakly in $H^1(\R^d)$ and so, by lower semicontinuity
$$\liminf_n\int|\nabla(u^+_n\vee u^+)|^2-|\nabla u^+|^2\,dx\ge0.$$
Combining \eqref{prima} and \eqref{seconda} we deduce that
\be\label{terza}
\limsup_{n\to\infty}\left(\int|\nabla v^+_n|^2\right)\le \limsup_{n\to\infty}\left(\int|\nabla u^+_n|^2\,dx\right).
\ee
Similarly we have
\be\label{quarta}
\limsup_{n\to\infty}\left(\int|\nabla v^-_n|^2\right)\le \limsup_{n\to\infty}\left(\int|\nabla u^-_n|^2\,dx\right).
\ee
Combining \eqref{utile}, \eqref{terza} and \eqref{quarta} we finally deduce \eqref{goal} and this concludes the lemma.
\end{proof}

\begin{rema}\label{rema}
By Lemma \ref{amu} for every measure $\mu\in\M_{ad}$ there exists a sequence of quasi-open sets $\O_n$ (that can be taken open by a standard approximation procedure) such that $I_{\O_n}$ $\gamma-$converges to $\mu$ and for which
$$|\O_n|\to |A_{\mu}|\quad \hbox{ as $n\to\infty$}.$$ 
This in turns implies that the set 
$$\{I_\O:\ \O\subset\R^d\hbox{ domain}\}$$
is $\gamma-$dense in $\M_{ad}$. Furthermore, we can extend both Saint-Venant, Faber-Krahn and P\'olya inequalities to any capacitary measure. That is
\be\label{FKSV}
|A_\mu|^{-(d+2)/d}T(\mu)\le|B|^{-(d+2)/d}T(B),\qquad |A_\mu|^{2/d}\lambda(\mu)\ge|B|^{2/d}\lambda(B),
\ee
and
\be\label{P}
0<|A_{\mu}|^{-1}\lambda(\mu)T(\mu)<1
\ee
for every measure $\mu\in\M_{ad}$ and every ball $B\subset\R^d$.
\end{rema}

\begin{prop}\label{relamu}
Let $\mu\in\M_{ad}$. Then we have
\be\label{amu1}
|A_\mu|=\inf\Big\{\liminf_n|\O_n|\ :\ \O_n\text{ domain, }\O_n\overset{\gamma}{\to}\mu\Big\}.
\ee
The quantity $|A_\mu|$ is then the relaxation, in the $\gamma$-convergence, of the Lebesgue measure $|\O|$. As a consequence, we have
\be\label{fqmu}
F_q(\mu)=\frac{\lambda(\mu)T^q(\mu)}{|A_\mu|^{\alpha_q}}.
\ee
\end{prop}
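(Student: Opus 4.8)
The plan is to prove \eqref{amu1} by two inequalities and then to read off \eqref{fqmu}. For the inequality in \eqref{amu1} stating that the infimum is at most $|A_\mu|$, I would argue as in Remark \ref{rema}: since $\M_{ad}$ is contained in the complete metric space of capacitary measures of finite torsion, in which the open sets are $\gamma$-dense, there is a sequence of open sets $\O_n$ with $I_{\O_n}\overset{\gamma}{\to}\mu$; by Lemma \ref{amu} the quasi-open sets $\O_n\cap A_\mu$ still $\gamma$-converge to $\mu$ and satisfy $|\O_n\cap A_\mu|\le|A_\mu|$, and a standard approximation procedure then replaces each of them by an open set without affecting the $\gamma$-limit or the value of the Lebesgue measure. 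This yields the desired bound.

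For the reverse inequality I would invoke the lower semicontinuity of $\mu\mapsto|A_\mu|$ along $\gamma$-convergence, one of the listed properties of $d_\gamma$: for any domain $\O_n$ with $\O_n\overset{\gamma}{\to}\mu$, since $A_{I_{\O_n}}=\O_n$,
\[
\liminf_n|\O_n|=\liminf_n|A_{I_{\O_n}}|\ge|A_\mu|.
\]
Taking the infimum over all such sequences gives the reverse inequality, hence \eqref{amu1}; along the way one sees that this infimum is in fact attained.

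To deduce \eqref{fqmu}, fix $\mu\in\M_{ad}$. The very definition of $A_\mu$ forces $T(\mu)<\infty$, and then the extended Faber--Krahn, Saint--Venant and P\'olya inequalities \eqref{FKSV}, \eqref{P}, together with $0<|A_\mu|<\infty$, give $0<\lambda(\mu)<\infty$ and $0<T(\mu)<\infty$. For any domain $\O_n\overset{\gamma}{\to}\mu$, the $\gamma$-continuity of $\lambda$ and of $T$ yields $\lambda(\O_n)\to\lambda(\mu)$ and $T(\O_n)\to T(\mu)$, so that $\limsup_n F_q(\O_n)=\lambda(\mu)T^q(\mu)\,\limsup_n|\O_n|^{-\alpha_q}$. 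Since $\alpha_q>1$ for $q>1$, the map $t\mapsto t^{-\alpha_q}$ is continuous and strictly decreasing on $(0,\infty)$, whence $\limsup_n|\O_n|^{-\alpha_q}=(\liminf_n|\O_n|)^{-\alpha_q}$; by \eqref{amu1} this is $\le|A_\mu|^{-\alpha_q}$ for every admissible sequence, with equality for the minimizing one from the first step. Taking the supremum over all admissible $\O_n$ gives $F_q(\mu)=\lambda(\mu)T^q(\mu)|A_\mu|^{-\alpha_q}$, i.e. \eqref{fqmu}.

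All the ingredients are already in place — Lemma \ref{amu}, Remark \ref{rema}, the lower semicontinuity of $\mu\mapsto|A_\mu|$, and the $\gamma$-continuity of $\lambda$ and $T$ — so I do not expect a genuinely hard step; the proof is essentially an assembly. The point needing the most care is the standard approximation turning the quasi-open sets $\O_n\cap A_\mu$ into open ones while preserving both $d_\gamma\to 0$ and $|\O_n|\to|A_\mu|$, and, in the derivation of \eqref{fqmu}, checking that $\lambda(\mu)$ and $T(\mu)$ are finite and strictly positive, so that $|\O_n|^{-\alpha_q}$ is genuinely the only factor of $F_q(\O_n)$ that fails to be $\gamma$-continuous.
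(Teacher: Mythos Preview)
Your proposal is correct and follows essentially the same route as the paper: lower semicontinuity of $\mu\mapsto|A_\mu|$ gives one inequality in \eqref{amu1}, Remark~\ref{rema} (via Lemma~\ref{amu}) gives the other, and the $\gamma$-continuity of $\lambda$ and $T$ then yields \eqref{fqmu}. You simply spell out in more detail what the paper compresses into ``by a similar argument'': the finiteness and positivity of $\lambda(\mu),T(\mu)$, and the observation that for $\alpha_q>0$ the only non-$\gamma$-continuous factor is $|\O_n|^{-\alpha_q}$, whose $\limsup$ is governed by $\liminf|\O_n|$ and hence by \eqref{amu1}. (Your phrasing ``since $\alpha_q>1$ for $q>1$'' tacitly restricts to the regime the paper cares about; strictly, the monotonicity step only needs $\alpha_q>0$, and for $\alpha_q<0$ the sup-relaxation would actually be $+\infty$, so some such restriction is implicit in the statement itself.)
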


\begin{proof}
The inequality $\le$ in \eqref{amu1} follows from the $\gamma$-lower semicontinuity of the map $\mu\mapsto|A_\mu|$ seen above. The opposite inequality follows at once by Remark \ref{rema}. Since $T(\mu)$ and $\lambda(\mu)$ are $\gamma$-continuous, the proof of \eqref{fqmu} is achieved by a similar argument.
\end{proof}

The scaling properties of the shape functionals $|\O|$, $\lambda(\O)$, $T(\O)$ and $F_q(\O)$ extend to their relaxations $|A_\mu|$, $\lambda(\mu)$, $T(\mu)$ and $F_q(\mu)$ in $\M_{ad}$. More precisely, setting for $t>0$
\[\mu_t(E)=t^{d-2}\mu(E/t),\] 
we have
\[
|A_{\mu_t}|=t^d|A_\mu|, \quad \lambda(\mu_t)=t^{-2}\lambda(\mu), \quad T(\mu_t)=t^{d+2}T(\mu), \quad F_q(\mu_t)=F_q(\mu).
\]

%%%%%%%%%%%%%%%%%%%%%%%%%%%%%%%%%%%%%%%%%%%%%%%%%%
\section{Existence of an optimal measure for $q>1$}\label{sexi}

In \cite{bfnt16} it is proved that the supremum $M_1=1$ is not attained in the class of domains. In the next proposition we point out that the same occurs even in the class $\M_{ad}$.

\begin{prop}[Nonexistence for $q=1$ of an optimal measure]\label{q1nomeas}
Given a capacitary measure $\mu\in \M_{ad}$ the problem
$\sup\{F_1(\mu):\ \mu\in\M_{ad}\}$ does not have a maximizer.
\end{prop}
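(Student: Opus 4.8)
The plan is to reduce the statement to the strict P\'olya inequality for capacitary measures, namely \eqref{P} in Remark~\ref{rema}.

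First I would note that for $q=1$ the exponent is $\alpha_1=\frac{-2+(d+2)}{d}=1$, so by Proposition~\ref{relamu} the relaxed functional takes the form $F_1(\mu)=|A_\mu|^{-1}\lambda(\mu)T(\mu)$ for every $\mu\in\M_{ad}$. (Membership in $\M_{ad}$ presupposes $T(\mu)<\infty$, so that $A_\mu$, and hence $F_1(\mu)$, are well defined; moreover $|A_\mu|>0$ together with $A_\mu=\{w_\mu>0\}$ forces $T(\mu)>0$.) With this identification, the inequality \eqref{P} reads exactly $0<F_1(\mu)<1$ for every $\mu\in\M_{ad}$, the upper bound being \emph{strict}. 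Consequently $\sup\{F_1(\mu):\mu\in\M_{ad}\}\le1$. On the other hand $F_1(I_\O)=F_1(\O)$ and, since $\{I_\O:\O\text{ domain}\}$ is $\gamma$-dense in $\M_{ad}$ (Remark~\ref{rema}), the relaxation construction gives $\sup\{F_1(\mu):\mu\in\M_{ad}\}=M_1=1$ (the value $M_1=1$ being the one recalled in the introduction from \cite{bfnt16}).

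Now suppose, towards a contradiction, that the supremum is attained at some $\mu\in\M_{ad}$. Then $F_1(\mu)=M_1=1$, which contradicts the strict inequality $F_1(\mu)<1$ established above. Hence the problem admits no maximizer.

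I do not expect a genuine obstacle here: the only point requiring care is that \eqref{P} holds with a \emph{strict} upper bound for \emph{all} $\mu\in\M_{ad}$, including measures whose set of finiteness is disconnected or singular; but this is precisely what Remark~\ref{rema} provides, by transferring the classical strict P\'olya inequality from approximating domains. If one wished to avoid quoting $M_1=1$ from \cite{bfnt16}, the only step with real content would be exhibiting a sequence of domains along which $F_1\to1$ (e.g. the homogenization-type examples of \cite{bfnt16}); that construction is external to the present argument, which only needs the strict inequality plus the known value of the supremum.
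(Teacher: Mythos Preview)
Your argument is correct and follows the same line as the paper: both reduce the claim to the strict inequality $F_1(\mu)<1$ for every $\mu\in\M_{ad}$ together with $M_1=1$. The only difference is in how the strictness is justified. You cite \eqref{P} from Remark~\ref{rema} directly, remarking that it is obtained ``by transferring the classical strict P\'olya inequality from approximating domains''; but note that passing to the limit in a strict inequality only yields a non-strict one, so this phrasing does not by itself explain why the upper bound stays strict. The paper closes this gap by invoking the \emph{quantitative} P\'olya inequality of \cite{bfnt16}, $F_1(\O)\le 1-c_d\,T(\O)|\O|^{-1-2/d}$, and then taking the limit along a recovery sequence $\O_n\overset{\gamma}{\to}\mu$ with $|\O_n|\le|A_\mu|$: since $T(\mu)>0$, the right-hand side stays strictly below $1$ in the limit. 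In effect, the paper's proof is precisely a justification of the strict upper bound in \eqref{P} that your argument takes for granted.
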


\begin{proof}
The proof follows at once by exploiting Theorem 1.1. of \cite{bfnt16} which asserts that there exists a dimensional constant $c_d>0$ for which
\be\label{polyafine}
F_1(\O)\le 1-\frac{c_d T(\O)}{|\O|^{1+\frac{2}{d}}},
\ee
for every domain $\O$. Then, for every $\mu\in\M_{ad}$, by Remark \ref{rema} we can select a sequence $\O_n\overset{\gamma}{\to}A_\mu$ for which 
$$
F_1(\O_n)\to F(\mu), \quad T(\O_n)\to T(\mu), \quad |\O_n|\le |A_\mu| \quad \text{as } n\to\infty .
$$
Thus, using \eqref{polyafine} with $\O=\O_n$ and passing to the limit as $n\to\infty$, we get $F_1(\mu)<1=M_1$.
\end{proof}

To prove the main result of this section we need the following elementary lemma.

\begin{lemm}\label{elementary}
Let $0<c_1<c_2<\infty$, $1<\alpha_1<\alpha_2<\infty$. Then, there exists $\beta<1$ such that, for every $a,b,c,d\in (c_1,c_2)$ it holds
$$
\frac{(a+b)^{\alpha_1}}{(c+d)^{\alpha_2}}\le \beta\max\left\{ \frac{a^{\alpha_1}}{c^{\alpha_2}}, \frac{b^{\alpha_1}}{d^{\alpha_2}}\right\}.
$$
\end{lemm}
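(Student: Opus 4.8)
The plan is to reduce the four-variable inequality to a compactness argument on the compact box $[c_1,c_2]^4$. First I would rewrite the desired inequality in the equivalent form: there exists $\beta<1$ such that for all $(a,b,c,d)\in[c_1,c_2]^4$,
\[
(a+b)^{\alpha_1}\le\beta\max\left\{\frac{a^{\alpha_1}}{c^{\alpha_2}},\frac{b^{\alpha_1}}{d^{\alpha_2}}\right\}(c+d)^{\alpha_2}.
\]
Equivalently, defining the continuous function
\[
G(a,b,c,d)=\frac{(a+b)^{\alpha_1}}{(c+d)^{\alpha_2}}\left(\max\left\{\frac{a^{\alpha_1}}{c^{\alpha_2}},\frac{b^{\alpha_1}}{d^{\alpha_2}}\right\}\right)^{-1}
\]
on the compact set $K=[c_1,c_2]^4$ (note the denominators never vanish since $c_1>0$), the claim is exactly that $\sup_K G<1$. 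Since $G$ is continuous on the compact set $K$, it attains its maximum at some point of $K$, so it suffices to show $G(a,b,c,d)<1$ pointwise on $K$; then one takes $\beta=\max_K G$.

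The key pointwise estimate is the following: for any $a,b>0$ and $c,d>0$,
\[
(a+b)^{\alpha_1}<2^{\alpha_1-1}\big(a^{\alpha_1}+b^{\alpha_1}\big)\le 2^{\alpha_1-1}\Big(\tfrac{a^{\alpha_1}}{c^{\alpha_2}}c^{\alpha_2}+\tfrac{b^{\alpha_1}}{d^{\alpha_2}}d^{\alpha_2}\Big)\le 2^{\alpha_1-1}\max\left\{\tfrac{a^{\alpha_1}}{c^{\alpha_2}},\tfrac{b^{\alpha_1}}{d^{\alpha_2}}\right\}\big(c^{\alpha_2}+d^{\alpha_2}\big),
\]
using convexity of $t\mapsto t^{\alpha_1}$ for the first (strict, since $\alpha_1>1$) inequality. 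This is not yet enough, because $c^{\alpha_2}+d^{\alpha_2}$ can exceed $(c+d)^{\alpha_2}$ is false — in fact $c^{\alpha_2}+d^{\alpha_2}\le(c+d)^{\alpha_2}$ since $\alpha_2>1$, so that bound goes the wrong way. Instead I would directly compare: on the box, $c+d\ge 2c_1$, so $(c+d)^{\alpha_2}\ge 2^{\alpha_2-1}(c^{\alpha_2}+d^{\alpha_2})$ again is the wrong direction. The cleaner route: show $G<1$ directly by checking that the numerator $(a+b)^{\alpha_1}$ is strictly dominated. Write $M=\max\{a^{\alpha_1}/c^{\alpha_2},\,b^{\alpha_1}/d^{\alpha_2}\}$, so $a^{\alpha_1}\le Mc^{\alpha_2}$ and $b^{\alpha_1}\le Md^{\alpha_2}$, hence $a\le M^{1/\alpha_1}c^{\alpha_2/\alpha_1}$ and $b\le M^{1/\alpha_1}d^{\alpha_2/\alpha_1}$. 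Then
\[
(a+b)^{\alpha_1}\le M\big(c^{\alpha_2/\alpha_1}+d^{\alpha_2/\alpha_1}\big)^{\alpha_1},
\]
so $G\le\big(c^{\alpha_2/\alpha_1}+d^{\alpha_2/\alpha_1}\big)^{\alpha_1}(c+d)^{-\alpha_2}$. Setting $r=\alpha_2/\alpha_1>1$ and $H(c,d)=(c^r+d^r)^{1/r}(c+d)^{-1}$, one has $H(c,d)<1$ for all $c,d>0$ by strict subadditivity of the $\ell^r$-norm (the power mean inequality, strict since $r>1$ and $c,d>0$); hence $G\le H^{\alpha_2}<1$ pointwise on $K$.

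The main obstacle is essentially bookkeeping: making sure the strictness survives and that the quantity being maximized is genuinely continuous on a compact set with no division by zero — both are guaranteed by $c_1>0$ and $\alpha_1,\alpha_2>1$. Once $G<1$ pointwise on the compact $K$ and $G$ is continuous there, compactness yields $\beta:=\max_K G<1$, which is the claimed constant; it depends only on $c_1,c_2,\alpha_1,\alpha_2$ (indeed, via the bound above, only on $\alpha_1,\alpha_2$). I would remark that no dependence on $c_1,c_2$ is actually needed for the existence of $\beta<1$, though the statement as given only asserts existence.
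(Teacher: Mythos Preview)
Your core argument is correct and takes a genuinely different route from the paper. You reduce the problem to showing that the continuous function
\[
G(a,b,c,d)=\frac{(a+b)^{\alpha_1}}{(c+d)^{\alpha_2}}\Big/\max\Big\{\frac{a^{\alpha_1}}{c^{\alpha_2}},\frac{b^{\alpha_1}}{d^{\alpha_2}}\Big\}
\]
is pointwise strictly less than $1$ on the compact box $[c_1,c_2]^4$, then invoke compactness to get $\beta=\max_K G<1$. Your pointwise bound $G\le H(c,d)^{\alpha_2}$ with $H(c,d)=(c^r+d^r)^{1/r}/(c+d)$, $r=\alpha_2/\alpha_1>1$, is clean, and the strict inequality $H<1$ (comparison of $\ell^r$ and $\ell^1$ norms on vectors with both coordinates positive) is correct. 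By contrast, the paper normalizes to $x=b/a$, $y=d/c$ and by a two--case analysis ($x\le y$ versus $x>y$) produces the explicit constant $\beta=(1+c_1/c_2)^{\alpha_1-\alpha_2}$. Their approach buys an explicit value of $\beta$; yours is more conceptual and avoids case distinctions.

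However, your closing remark is wrong: $\beta$ \emph{does} depend on $c_1,c_2$. The function $H(c,d)$ is $0$-homogeneous and tends to $1$ as $c/d\to0$ or $c/d\to\infty$, so $\sup_{c,d>0}H=1$ is not attained and the bound $G\le H^{\alpha_2}$ alone does not yield any uniform $\beta<1$. Concretely, taking $c=1$, $d=N$, $a=1$, $b=N^{r}$ one has $a^{\alpha_1}/c^{\alpha_2}=b^{\alpha_1}/d^{\alpha_2}=1$ and $G=(1+N^r)^{\alpha_1}/(1+N)^{\alpha_2}\to 1$ as $N\to\infty$. So the compactness of $[c_1,c_2]^4$ is what genuinely saves the argument, and the resulting $\beta$ depends on the ratio $c_1/c_2$ --- exactly as the paper's explicit constant makes transparent. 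Drop the parenthetical claim and the last sentence, and the proof stands.
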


\begin{proof}
Letting $x=b/a$ and $y=d/c$, is enough to prove that
$$
\frac{(1+x)^{\alpha_1}}{(1+y)^{\alpha_2}}\le \beta \max\left\{1, \frac{x^{\alpha_1}}{y^{\alpha_2}}\right\}.
$$
Suppose that $x\le y$. Since $x\ge \frac{c_1}{c_2}$, it holds
\be\label{caso1}
(1+x)^{\alpha_1}= (1+x)^{\alpha_2}(1+x)^{\alpha_1-\alpha_2}\le (1+y)^{\alpha_2}\left(1+\frac{c_1}{c_2}\right)^{\alpha_1-\alpha_2}.
\ee
Similarly, if $x>y$, since $x\le \frac{c_2}{c_1}$, it holds
\be\label{caso2}
\left(1+\frac{1}{x}\right)^{\alpha_1}\le \left(1+\frac{1}{y}\right)^{\alpha_2}\left(1+\frac{1}{x}\right)^{\alpha_1-\alpha_2}\le \left(1+\frac{1}{y}\right)^{\alpha_2}\left(1+\frac{c_2}{c_1}\right)^{\alpha_1-\alpha_2}.
\ee
Eventually we achieve the thesis by letting 
$$\beta=\left(1+\frac{c_1}{c_2}\right)^{\alpha_1-\alpha_2}$$
and combining \eqref{caso1} and \eqref{caso2}.
\end{proof}

\begin{theo}[Existence for $q>1$ of an optimal measure] \label{existence}
For every $q>1$ there exists a measure $\mu^\star\in \M_{ad}$ such that
$$F_q(\mu^\star)=\sup \left\{ F_q(\mu)\, : \, \mu\in \M_{ad} \right\}.$$
\end{theo}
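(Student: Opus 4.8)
The plan is to run the direct method of the calculus of variations, using the concentration--compactness alternative of Theorem \ref{Bucurconcomp} to rule out vanishing and dichotomy along a maximizing sequence. Since $M_q<\infty$ for $q>1$ (known result quoted in the introduction), fix a maximizing sequence of domains $\O_n$ with $F_q(\O_n)\to M_q$; by scaling invariance of $F_q$ we may normalize $|\O_n|=1$ for all $n$. The measures being uniformly bounded, Theorem \ref{Bucurconcomp} applies, and we must show that only the compactness case can occur; in that case $I_{\O_n}(x_n+\cdot)$ $\gamma$-converges to some capacitary measure $\mu^\star$, which by $\gamma$-continuity of $\lambda$ and $T$, together with the lower semicontinuity $|A_{\mu^\star}|\le\liminf|\O_n|=1$ and the formula \eqref{fqmu}, satisfies
$$F_q(\mu^\star)=\frac{\lambda(\mu^\star)T^q(\mu^\star)}{|A_{\mu^\star}|^{\alpha_q}}\ge\lim_n F_q(\O_n)=M_q,$$
hence $F_q(\mu^\star)=M_q$ and $\mu^\star\in\M_{ad}$ (note $|A_{\mu^\star}|>0$ since $T(\mu^\star)=\lim T(\O_n)>0$, the latter because otherwise $F_q$ would degenerate — one checks $M_q>0$ easily, e.g. evaluating $F_q$ on a ball).

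First I would exclude \textbf{vanishing}: in that case $\lambda(\O_n)\to\infty$ while $\|w_{\O_n}\|_{L^\infty}\to0$; since $T(\O_n)\le|\O_n|\,\|w_{\O_n}\|_{L^\infty}\to0$ and $q>1$, the P\'olya inequality $\lambda(\O_n)T(\O_n)<|\O_n|=1$ gives $\lambda(\O_n)T^q(\O_n)=\big(\lambda(\O_n)T(\O_n)\big)T^{q-1}(\O_n)\le T^{q-1}(\O_n)\to0$, so $F_q(\O_n)\to0$, contradicting $M_q>0$.

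Next I would exclude \textbf{dichotomy}, and this is the step I expect to be the main obstacle. In the dichotomy case one gets $\O_n^1,\O_n^2$ with $d_\gamma(I_{\O_n},I_{\O_n^1\cup\O_n^2})\to0$, $\mathrm{dist}(\O_n^1,\O_n^2)\to\infty$, and $\liminf T(\O_n^i)>0$. Because the two pieces are asymptotically infinitely far apart, $T(\O_n^1\cup\O_n^2)=T(\O_n^1)+T(\O_n^2)$ and $\lambda(\O_n^1\cup\O_n^2)=\min\{\lambda(\O_n^1),\lambda(\O_n^2)\}$, while $|\O_n^1|+|\O_n^2|\le|\O_n|=1$; moreover $\gamma$-continuity of $\lambda,T$ transfers the limit values of $\lambda,T$ from $\O_n$ to $\O_n^1\cup\O_n^2$. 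Passing to a subsequence so that all quantities $\lambda(\O_n^i)$, $T(\O_n^i)$, $|\O_n^i|$ converge, the P\'olya and Faber--Krahn inequalities confine them to a compact interval $(c_1,c_2)\subset(0,\infty)$ bounded away from $0$ and $\infty$ (here one uses $T(\O_n^i)>0$ bounded below and the normalization, plus $\lambda(\O_n^i)\ge\lambda(B)$). Writing $\lambda(\O_n)T^q(\O_n)=\min\{\lambda_n^1,\lambda_n^2\}\big(T_n^1+T_n^2\big)^q$ and $|\O_n|^{\alpha_q}=1\ge(|\O_n^1|+|\O_n^2|)^{\alpha_q}$, I would apply Lemma \ref{elementary} — with $\alpha_1=q>1$ and $\alpha_2=\alpha_q$ (which exceeds $1$ precisely when $q>2/(d+2)$, true here), after splitting $\min\{\lambda_n^1,\lambda_n^2\}=\min$ and pairing each $\lambda$-term with a measure term — to conclude that $F_q(\O_n)\le\beta\max\{F_q(\O_n^1),F_q(\O_n^2)\}+o(1)$ with $\beta<1$ a fixed constant. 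Since $F_q(\O_n^i)\le M_q$, this forces $M_q\le\beta M_q$, hence $M_q\le0$, contradicting $M_q>0$. The delicate points here are: verifying that the hypotheses of Lemma \ref{elementary} are met (in particular identifying the correct four arguments $a,b,c,d$, which requires rewriting $\min\{\lambda^1,\lambda^2\}\,(T^1+T^2)^q$ in the form $(a+b)^{\alpha_1}/(c+d)^{\alpha_2}$ — one natural choice is $a=(\lambda^1)^{1/q}T^1$, $b=(\lambda^2)^{1/q}T^2$ is not quite it, so one instead bounds $\min\{\lambda^1,\lambda^2\}(T^1+T^2)^q\le\big((\lambda^1)^{1/q}T^1+(\lambda^2)^{1/q}T^2\big)^q$ using concavity/Minkowski-type estimates and then takes $\alpha_1=q$); and controlling the error terms coming from $d_\gamma(I_{\O_n},I_{\O_n^1\cup\O_n^2})\to0$ via the $\gamma$-continuity of $\lambda$, $T$ and the lower semicontinuity of the measure. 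Once vanishing and dichotomy are ruled out, compactness holds and the argument of the first paragraph concludes the proof. $\qed$
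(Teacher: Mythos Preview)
Your approach is essentially that of the paper: direct method via the concentration--compactness alternative, ruling out vanishing by P\'olya and dichotomy by Lemma~\ref{elementary}. Two small corrections. First, $\alpha_q>1$ is equivalent to $q>1$, not $q>2/(d+2)$ (the latter is the threshold for $\alpha_q>0$); since you also need $\alpha_1<\alpha_2$, i.e.\ $q<\alpha_q$, you should check that this too is equivalent to $q>1$. Second, your application of Lemma~\ref{elementary} is over-engineered: there is no need to absorb $\lambda$ into the quantities $a,b$. The paper simply takes $a=T(\O_n^1)$, $b=T(\O_n^2)$, $c=|\O_n^1|$, $d=|\O_n^2|$, obtains
\[
\frac{(T(\O_n^1)+T(\O_n^2))^q}{(|\O_n^1|+|\O_n^2|)^{\alpha_q}}\le\beta\,\max_{i=1,2}\frac{T(\O_n^i)^q}{|\O_n^i|^{\alpha_q}},
\]
and then multiplies both sides by $\lambda(\O_n)$, using the monotonicity $\lambda(\O_n)\le\lambda(\O_n^i)$ (since $\O_n^i\subset\O_n$) on the right. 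This avoids the Minkowski-type detour entirely.
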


\begin{proof}
We select a sequence $\mu_n\in \M_{ad}$ such that $
F_q(\mu_n)\to M_q$, as $n\to\infty$. By density, we can suppose that $\mu_n=I_{\O_n}$, for some sequence of open sets $\O_n$. Further, being $F_q$ scaling free, we can also assume $|\O_n|=1$.
Hence, we can apply Theorem \ref{Bucurconcomp}.

If dichotomy occurs, then there exist two sequences of quasi-open sets $\O_n^1,\O_n^2\subset\O_n$ such that 
$$\O_n^1\cap \O_n^2=\emptyset,\quad d_{\gamma}(I_{\O_n}, I_{\O^1_n\cup\O^2_n})\to0 \quad \text{ as }n\to \infty.$$
Taking into account the Saint-Venant inequality and the fact that $|\O_n|=1$, there exist constants $c_1,c_2>0$, which depend only on the dimension, such that 
$$
c_1<\inf_{n}|T(\O_n^i)|\le\sup_{n}|T(\O_n^i)|<c_2, \quad c_1<\inf_{n}|\O_n^i|\le\inf_{n}|\O_n^i|<c_2, \hbox{ for $i=1,2$}.
$$
Since $\lambda_1$ is increasing with respect to set inclusion, we have
\be\label{lambdautile}
\lambda_1(\O_n)\le\min\{\lambda_1(\O_n^1),\lambda(\O^2_n)\}.
\ee
Lemma \ref{elementary} together with \eqref{lambdautile} gives
$$\frac{\la(\O_n)\left(T(\O_n^1\cup \O_n^2)\right)^q}{|\O_n|^{\alpha_q}}\le \frac{\la(\O_n)\left(T(\O_n^1)+T(\O_n^2)\right)^q}{(|\O_n^1|+|\O_n^2|)^{\alpha_q}}\le\beta \max_{i=1,2}\frac{\la(\O_n^i)T^q(\O_n^i)}{|\O_n^i|^{\alpha_q}}<F_q(\O_n).$$
By taking the limit for $n\to\infty$ in the latter inequality we obtain the contradiction
$$\sup_{\mu\in\M_{ad}}F(\mu)<\sup_{\mu\in\M_{ad}}F(\mu),$$
and hence dichotomy cannot occur.
Now, the maximality condition on the sequence $\O_n$ together with P\'olya inequality gives that for $n$ large enough
\be\label{ineut}
\la(B)T^q(B)/|B|^{\alpha_q}\le\lambda(\O_n)T^q(\O_n)=\lambda(\O_n)T(\O_n)\cdot T^{q-1}(\O_n)\le T^{q-1}(\O_n),
\ee
where $B$ is any ball of $\R^d$. In particular it cannot be $\lim_{n\to\infty}T(\O_n)=0$, and this rules out the vanishing case.

Therefore compactness holds and there exists a capacitary measure $\mu^\star$ and a sequence $x_n\in\R^d$ such that $I_{x_n+\O_n}$ $\gamma-$converges to $\mu^\star$.

By \eqref{ineut} we deduce that $T(\mu^\star)>0$ which by \eqref{FKSV} implies $|A_{\mu^\star}|>0$ and hence that $\mu^\star$ belongs to $\M_{ad}$. Clearly the measure $\mu^\star$ maximizes the functional $F_q$ on $\M_{ad}$ and this concludes the proof.
\end{proof}

%%%%%%%%%%%%%%%%%%%%%%%%%%%%%%%%%%%%%%%%%%%%%%%%%%%%%%%
\section{Optimal measures are quasi-open sets for large $q$}\label{sopt}

We are now interested to prove that, when $q$ is large enough, optimal measures $\mu$ coming from Theorem \ref{existence} can be represented as quasi-open sets. We begin by recalling the following result, see \cite{Da} and \cite{V15} Proposition 3.83.

\begin{theo}\label{Davies}
Let $\mu$ be a capacitary measure with finite torsion. Then the eigenfunctions $u\in L^2(\R^d)$ of the operator $-\Delta +\mu$ with unitary $L^2$ norm are in $L^{\infty}(\R^d)$ and satisfy 
$$\|u\|_\infty\le e^{1/(8\pi)}\lambda(\mu)^{d/4}.$$
\end{theo}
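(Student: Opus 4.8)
The plan is to deduce the pointwise bound from an $L^2\!\to\!L^\infty$ smoothing estimate for the semigroup generated by $-\Delta+\mu$. Since $T(\mu)<\infty$, the nonnegative closed form $\mathcal E_\mu(u)=\int|\nabla u|^2\,dx+\int u^2\,d\mu$ on $H^1_\mu$ defines a self-adjoint operator $-\Delta+\mu$ with compact resolvent; it generates the strongly continuous, positivity-preserving semigroup $P^\mu_t:=e^{-t(-\Delta+\mu)}$ on $L^2(\R^d)$, and if $(-\Delta+\mu)u=\lambda u$ then $P^\mu_t u=e^{-\lambda t}u$ by the spectral theorem. The key estimate I would establish is
\[
\|P^\mu_t\|_{L^2(\R^d)\to L^\infty(\R^d)}\le(8\pi t)^{-d/4}\qquad\text{for every }t>0 .
\]

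There are two natural ways to get this, both using only $\mu\ge0$. The first is by domination: since $\mathcal E_\mu$ is a Dirichlet form (normal contractions increase neither $\int|\nabla u|^2$ nor $\int u^2\,d\mu$) dominating the classical Dirichlet form of $-\Delta$ on $H^1(\R^d)$, one has the pointwise comparison $|P^\mu_t f|\le e^{t\Delta}|f|$; equivalently, via a Feynman--Kac representation $P^\mu_t f(x)=\mathbb E_x\big[e^{-A^\mu_t}f(B_t)\big]$ with $A^\mu_t\ge0$ the additive functional of $\mu$, so that $e^{-A^\mu_t}\le1$. It then suffices to recall that the Gaussian kernel $p_t(x,y)=(4\pi t)^{-d/2}e^{-|x-y|^2/(4t)}$ satisfies, by Cauchy--Schwarz, $|e^{t\Delta}g(x)|\le\|p_t(x,\cdot)\|_{L^2}\|g\|_{L^2}$ with $\|p_t(x,\cdot)\|_{L^2}=(8\pi t)^{-d/4}$, which yields the displayed bound. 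The second route avoids probabilistic tools: the sharp Euclidean logarithmic Sobolev inequality, $\int u^2\log u^2\,dx\le t\int|\nabla u|^2\,dx-\tfrac d2\log(\pi t)-d$ for $\|u\|_{L^2}=1$, holds a fortiori with $\int|\nabla u|^2$ replaced by $\mathcal E_\mu(u)$, and Davies' equivalence between logarithmic Sobolev inequalities and ultracontractivity (see \cite{Da}) converts this into the same $L^2\!\to\!L^\infty$ bound.

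Finally, let $u$ be an $L^2$-normalized eigenfunction with eigenvalue $\lambda$; note that $\lambda\ge\lambda(\mu)>0$ by the Faber--Krahn bound \eqref{FKSV}. From $P^\mu_t u=e^{-\lambda t}u$ we get, for every $t>0$,
\[
\|u\|_{L^\infty}=e^{\lambda t}\,\|P^\mu_t u\|_{L^\infty}\le e^{\lambda t}\,\|P^\mu_t\|_{L^2\to L^\infty}\le e^{\lambda t}(8\pi t)^{-d/4},
\]
and the choice $t=1/(8\pi\lambda)$, for which $8\pi t=1/\lambda$, gives precisely $\|u\|_{L^\infty}\le e^{1/(8\pi)}\lambda^{d/4}$ (for a first eigenfunction $\lambda=\lambda(\mu)$, which recovers the stated form). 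I expect the main obstacle to be the smoothing estimate for the semigroup when $\mu$ is a genuinely singular capacitary measure (possibly $+\infty$ on sets of positive Lebesgue measure): the cleanest fix is the monotone approximation of $\mu$ by the bounded potentials $\mu\wedge k\,dx$, for which domination by $e^{t\Delta}$ is immediate by Feynman--Kac, followed by $k\to\infty$ (the semigroups $e^{-t(-\Delta+\mu\wedge k\,dx)}$ decrease on nonnegative data to $P^\mu_t$). Once the semigroup bound is secured, the rest is the elementary optimization in $t$ above.
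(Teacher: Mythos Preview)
Your argument is correct and is precisely the classical Davies ultracontractivity proof: domination of $P^\mu_t$ by the free heat semigroup (or, equivalently, the logarithmic Sobolev route) gives the $L^2\!\to\!L^\infty$ bound $(8\pi t)^{-d/4}$, and the particular choice $t=1/(8\pi\lambda)$ turns $e^{\lambda t}(8\pi t)^{-d/4}$ into $e^{1/(8\pi)}\lambda^{d/4}$. The paper does not supply its own proof of this statement; it simply recalls it from \cite{Da} and \cite{V15}, Proposition~3.83, so your write-up is in fact \emph{more} than what the paper contains, and it follows exactly the approach of the cited reference \cite{Da}. One small remark: your justification of $\lambda(\mu)>0$ via \eqref{FKSV} tacitly uses $|A_\mu|<\infty$, which is not part of the hypothesis ``finite torsion''; it is cleaner to note directly that $T(\mu)<\infty$ forces the compact embedding $H^1_\mu\hookrightarrow L^2$ (as recalled in Section~\ref{scap}), whence the infimum in the Rayleigh quotient is attained and strictly positive since no nonzero constant lies in $H^1(\R^d)$.
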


We also use the following lemma.

%\begin{lemm}\label{qtoinfty}
%For every $q>1$ let $\mu_q\in\M_{ad}$ be a maximal measure for the functional $F_q$, such that $|A_{\mu_q}|=1$. Then there exists $x_q\in\R^d$ such that the measures $\mu_q(x_q+\cdot)$ $\gamma-$converge, as $q\to\infty$, to $\mathcal{L}^d\res B$, where $B$ is a ball of $\R^d$.
%\end{lemm}

\begin{lemm}\label{qtoinfty}
For every $q>1$ let $\mu_q\in\M_{ad}$ be a maximal measure for the functional $F_q$, such that $|A_{\mu_q}|=1$. Then 
$$\liminf_{q\to\infty}T(\mu_q)>0.$$
\end{lemm}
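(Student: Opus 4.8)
The plan is to combine the maximality of $\mu_q$, tested against a ball, with the P\'olya inequality, and then to analyze the limiting behaviour of the resulting lower bound as $q\to\infty$. In fact this is essentially the estimate \eqref{ineut} already used in the proof of Theorem \ref{existence}, transplanted to the optimal measure.

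First, let $B$ be a ball with $|B|=1$, so that $I_B\in\M_{ad}$ and $|A_{I_B}|=|B|=1$. Since $\mu_q$ maximizes $F_q$ over $\M_{ad}$ and $|A_{\mu_q}|=1=|B|$, the normalizing powers $|A_\cdot|^{\alpha_q}$ cancel and we obtain
$$\la(\mu_q)T^q(\mu_q)=F_q(\mu_q)\ge F_q(I_B)=\la(B)T^q(B).$$
Next I would invoke the extended P\'olya inequality \eqref{P}: since $|A_{\mu_q}|=1$ it gives $\la(\mu_q)T(\mu_q)<1$, hence $\la(\mu_q)<1/T(\mu_q)$. Substituting into the previous inequality yields
$$\la(B)T^q(B)\le \la(\mu_q)T^q(\mu_q)<\frac{T^q(\mu_q)}{T(\mu_q)}=T^{q-1}(\mu_q),$$
and therefore the explicit lower bound
$$T(\mu_q)>\bigl(\la(B)T^q(B)\bigr)^{1/(q-1)}=\la(B)^{1/(q-1)}\,T(B)^{q/(q-1)}.$$

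Finally, $\la(B)$ and $T(B)$ are fixed positive and finite constants depending only on $d$, so as $q\to\infty$ we have $\la(B)^{1/(q-1)}\to1$ and $T(B)^{q/(q-1)}\to T(B)$; hence the right-hand side converges to $T(B)>0$, giving $\liminf_{q\to\infty}T(\mu_q)\ge T(B)>0$. There is no substantive obstacle in this argument: the only points requiring a little care are that the comparison inequality $F_q(\mu_q)\ge F_q(I_B)$ and the P\'olya inequality are available in the relaxed setting of capacitary measures (Remark \ref{rema}, formula \eqref{P}), and that one keeps track of the normalization $|A_{\mu_q}|=1$ so that the $\alpha_q$-powers drop out cleanly; the limit of the exponents is elementary.
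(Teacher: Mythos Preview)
Your proof is correct, and it is genuinely more direct than the paper's argument. You work entirely at the level of the optimal measures $\mu_q$ themselves: the maximality inequality $F_q(\mu_q)\ge F_q(I_B)$ combined with the relaxed P\'olya inequality \eqref{P} immediately yields the explicit lower bound $T(\mu_q)>\lambda(B)^{1/(q-1)}T(B)^{q/(q-1)}$, whose limit is $T(B)$. The paper instead passes to an approximating sequence of open sets $\O_n$ with $|\O_n|=1$ and $|F_{q_n}(\O_n)-F_{q_n}(\mu_{q_n})|=o(T^{q_n}(B))$, applies the full concentration--compactness alternative of Theorem~\ref{Bucurconcomp}, rules out dichotomy and vanishing, and only then extracts the bound $\liminf T(\O_n)\ge T(B)$. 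What your route buys is economy: it avoids the diagonal approximation, the appeal to Theorem~\ref{Bucurconcomp}, and the somewhat delicate bookkeeping of the $o(T^{q_n}(B))$ error term; the only ingredient you need beyond maximality is that \eqref{P} holds for capacitary measures, which the paper already established in Remark~\ref{rema}. The paper's approach, on the other hand, reuses verbatim the machinery of Theorem~\ref{existence}, so it is conceptually unified with the existence proof even if it is heavier than necessary here.
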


\begin{proof}
Let $q_n$ be a diverging sequence and $B\subset \R^d$ be a ball of unitary measure. By a standard diagonal argument we can select a sequence $\O_n\subset\R^d$ of open sets such that $|\O_n|= 1$ for every $n$ and
\be\label{opiccolo}
|F_{q_n}(\O_n)-F_{q_n}(\mu_{q_n})|=o(T^{q_n}(B)) \quad \hbox{as $n\to\infty$}.
\ee
Then we can apply Theorem \ref{Bucurconcomp} to the sequence $\O_n$. 
Dichotomy can be ruled out by the same argument as in the proof of Theorem \ref{existence} once noticed that a combination of \eqref{FKSV} and \eqref{P} implies
$$F^{1/{q_n}}_{q_n}(\mu)\leq T^{(q_n-1)/q_n}(B)\rightarrow T(B) \, \text{ as } n\rightarrow \infty.$$
The vanishing case can be excluded too by following again the proof of Theorem \ref{existence}. Indeed, for $n$ large enough, P\'olya inequality and \eqref{opiccolo} imply
$$
T^{q_n-1}(\O_n)\ge F_{q_n}(\O_n)\ge F_{q_n}(\mu_{q_n})-|F_{q_n}(\O_n)-F_{q_n}(\mu_{q_n})| \ge F_{q_n}(B)+o(T^{q_n}(B)). 
$$
Hence we deduce
$$\liminf_{n\to\infty} T^{(1-1/q_n)}(\O_n)>0,$$
which implies that it cannot be $T(\O_n)\to 0$, as $n\to\infty$. Therefore compactness holds true and the sequence $\O_n$ has a subsequence (still denoted by the same indices) that $\gamma$-converges to some $\mu \in\M_{ad}$ up to translations.

By the maximality of $\mu_{q_n}$ it holds
$$F^{1/q_n}_{q_n}(B)\le F^{1/q_n}_{q_n}(\mu_{q_n})=T(\O_n)(\lambda(\O_n)+o(1))^{1/q_n}$$
and we deduce, passing to the limit as $n\to \infty$
$$
T(B)\le T(\mu)=\lim_{n\to\infty}T(\O_n).
$$
Since the sequence $q_n$ was arbitrary we obtain the conclusion.
\end{proof}

\begin{theo}\label{extDomain}
Let $\mu\in\M_{ad}$ be an optimal measure for $F_q$ with $q>1$. There exists $q_0>1$ such that for $q>q_0$ we have $\mu=I_{A_{\mu}}$. In particular the optimal measure can be represented by a quasi-open set.
\end{theo}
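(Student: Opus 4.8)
The plan is to show that for $q$ large, an optimal measure $\mu$ for $F_q$ must in fact equal $I_{A_\mu}$, i.e.\ have no singular part forcing a strict inequality $\mu > I_{A_\mu}$. The natural strategy is a comparison argument: given an optimal $\mu$, we replace it by $I_{A_\mu}$ and compare the values $F_q(\mu)$ and $F_q(I_{A_\mu})$. Since $I_{A_\mu} \le \mu$, we have $H^1_\mu \subseteq H^1_{I_{A_\mu}} = H^1_0(A_\mu)$, hence $\lambda(I_{A_\mu}) \le \lambda(\mu)$ and $T(I_{A_\mu}) \ge T(\mu)$, while $|A_{I_{A_\mu}}| = |A_\mu|$. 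Thus $F_q(I_{A_\mu}) \ge \lambda(I_{A_\mu}) T^q(I_{A_\mu}) / |A_\mu|^{\alpha_q}$, but this bound goes the wrong way for $\lambda$, so the inequality $F_q(I_{A_\mu}) \ge F_q(\mu)$ is not immediate: we gain on $T^q$ but potentially lose on $\lambda$. The point of taking $q$ large is precisely that the gain in the $T^q$ factor should dominate.

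First I would normalize $|A_\mu| = 1$ by scaling and invoke Lemma~\ref{qtoinfty} to fix a constant $c_0 > 0$ and a threshold so that $T(\mu) \ge c_0$ for all $q$ large; together with the Saint-Venant inequality this pins $T(\mu)$ in a fixed compact subinterval of $(0,\infty)$, and via P\'olya's inequality \eqref{P} it also bounds $\lambda(\mu) T(\mu) < 1$, so $\lambda(\mu)$ is bounded above. The key quantitative input should be an estimate relating $T(\mu)$ and $T(I_{A_\mu})$ to the ``gap'' between $\mu$ and $I_{A_\mu}$, controlled by the torsion functions: one has $0 \le w_\mu \le w_{I_{A_\mu}}$ pointwise, both supported (up to capacity) on $A_\mu$, and if $w_\mu \ne w_{I_{A_\mu}}$ then $T(I_{A_\mu}) - T(\mu) > 0$ strictly. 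I would then write, using $\lambda(I_{A_\mu}) \le \lambda(\mu)$ is the bad direction, the comparison in the form
\[
\frac{F_q(I_{A_\mu})}{F_q(\mu)} \ge \frac{\lambda(A_\mu)}{\lambda(\mu)}\left(\frac{T(A_\mu)}{T(\mu)}\right)^{q},
\]
and aim to show the right-hand side exceeds $1$ unless $\mu = I_{A_\mu}$. Since $T(A_\mu)/T(\mu) \ge 1$, if this ratio is $> 1$ then for $q$ large the right side blows up and beats the fixed factor $\lambda(A_\mu)/\lambda(\mu) \in (0,1]$; contradiction with optimality of $\mu$. Hence $T(A_\mu) = T(\mu)$, which forces $w_\mu = w_{I_{A_\mu}}$, and from the characterization of the torsion function (it determines the $\gamma$-class, or directly: $w_\mu$ solving $-\Delta w_\mu + w_\mu \mu = 1$ with $w_\mu = w_{A_\mu} > 0$ q.e.\ on $A_\mu$ forces $\int w_\mu^2 \, d\mu = 0$, hence $\mu = I_{A_\mu}$ by the identification \eqref{identifymeasure}) we conclude.

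The main obstacle is twofold. First, one must be careful that the threshold $q_0$ can be chosen \emph{uniformly}: a priori the optimal $\mu$ and the ratios $\lambda(A_\mu)/\lambda(\mu)$, $T(A_\mu)/T(\mu)$ depend on $q$, so the argument has to be run along a hypothetical sequence $q_n \to \infty$ with $\mu_{q_n} \ne I_{A_{\mu_{q_n}}}$ and derive a contradiction from compactness — this is exactly why Lemma~\ref{qtoinfty} and Theorem~\ref{Bucurconcomp} (yielding a $\gamma$-convergent subsequence after normalization and translation) are in place. One extracts a $\gamma$-limit $\mu_\infty$, notes $T(\mu_\infty) \ge T(B) > 0$, and must show the strict-gap phenomenon survives in the limit, or rather that it cannot have been present for $n$ large. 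Second, one needs the strict monotonicity $T(I_{A_\mu}) > T(\mu)$ whenever $\mu \gneq I_{A_\mu}$ (on the quasi-open set $A_\mu$), together with a lower bound on how large this gap is — here the boundedness of the relevant quantities from Lemma~\ref{qtoinfty}, and possibly the $L^\infty$ bound on torsion/eigenfunctions from Theorem~\ref{Davies}, should give that $T(A_\mu)/T(\mu)$ is either $=1$ or bounded away from $1$ by a constant independent of $q$, which is what makes the $q$-th power argument close.
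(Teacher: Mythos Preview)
Your global comparison of $F_q(I_{A_\mu})$ against $F_q(\mu)$ is natural, but the argument has a genuine gap at exactly the point you flag: uniformity of the threshold $q_0$. You need $r_q:=T(A_{\mu_q})/T(\mu_q)$ to be either equal to $1$ or bounded away from $1$ by a constant independent of $q$, and this dichotomy is simply not available. Nothing rules out optimal measures of the form $\mu_q=I_{A_{\mu_q}}+q^{-2}\nu_q$ with $\nu_q$ a bounded capacitary measure supported in $A_{\mu_q}$; then $r_q-1$ is of order $q^{-2}$, so $r_q^{\,q}\to1$ and the factor $(\lambda(A_{\mu_q})/\lambda(\mu_q))\,r_q^{\,q}$ need not exceed $1$. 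The compactness route you sketch does not rescue this either: even if $\mu_{q_n}\overset{\gamma}{\to}\mu_\infty$ after normalization, the hypothesis $\mu_{q_n}\ne I_{A_{\mu_{q_n}}}$ does not pass to the limit in any useful way (the sequence may well converge to an honest $I_\Omega$), so no contradiction emerges.

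The paper circumvents this by replacing the global comparison with a first-order perturbation. One sets $\mu_\eps=(1-\eps)\mu$ and computes
\[
\frac{d}{d\eps}\Big|_{\eps=0^+}F_q(\mu_\eps)=F_q(\mu)\int\Big(q\,\frac{w_\mu^2}{T(\mu)}-\frac{u_\mu^2}{\lambda(\mu)}\Big)\,d\mu,
\]
so that optimality forces $\int\big(u_\mu^2/\lambda(\mu)-q\,w_\mu^2/T(\mu)\big)\,d\mu\ge0$. The decisive ingredient is the \emph{pointwise} estimate $u_\mu\le e^{1/(8\pi)}\lambda(\mu)^{d/4+1}w_\mu$ q.e., obtained from Theorem~\ref{Davies} together with the comparison principle for $-\Delta+\mu$. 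Since Lemma~\ref{qtoinfty} and P\'olya's inequality bound $\lambda(\mu_q)$ uniformly in $q$, this yields a fixed constant $C$ with $u_\mu^2/\lambda(\mu)\le C\,w_\mu^2/T(\mu)$ q.e.; as soon as $q>C$ the integrand is strictly negative q.e.\ on $\{w_\mu>0\}$, which forces $\int w_\mu^2\,d\mu=0$ and hence $\mu=I_{A_\mu}$. The missing idea in your approach is precisely this pointwise comparison of the eigenfunction against the torsion function; a comparison at the level of the scalar quantities $\lambda$ and $T$ alone cannot produce the required uniform threshold.
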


\begin{proof}

Since $F_q$ is scaling free, we can suppose that $|A_\mu|=1$. Let $\eps>0$ be a small parameter and let $\mu_\eps$ be the capacitary measure defined by
$$
\mu_\eps(E)=(1-\eps)\mu(E).
$$
Being $A_\mu=\A_{\mu_\eps}$ we have $\mu_\eps\in\M_{ad}$.
We assume by contradiction that $\mu\neq I_{A_\mu}$ (notice that this implies $\mu_\eps\neq \mu$). 
For the sake of brevity, we denote respectively by $w$ and $w_\eps$ the torsion functions of $\mu$ and $\mu_\eps$.
%Since every function $u\in H^{1}_{\mu_\eps}$ vanishes q.e. in $A^c_\mu$ (being $\mu_\eps$ infinite there), we have $w_\eps=0$ q.e. on $A_\mu^c$.
%This in particular implies that $A_{\mu_\eps}\subseteq A_\mu$, and so that $\mu_\eps\in\M_{ad}$. In fact, being $\nu$ a finite measure we have $|A_\mu|=|A_{\mu_\eps}|$.
%We also notice that $\mu\ge I_{A_\mu}$ (see Proposition 3.74 in \cite{V15}) so that $\mu(E)=\infty$, for every $E\subseteq\R^d$ Borel set with $\cp(E\setminus A_\mu)>0$. 
%Suppose by contradiction that $\mu\neq I_{A_\mu}$. Then, there exists a set $E_0$ such that $\cp(E_0\setminus A_\mu)=0$ and for which $\mu(E_0)>0$. Possibly taking a proper subset we can assume $E_0$ to be bounded.
%Let $\nu$ be a nonzero finite Borel measure such that for every Borel set $E\subset\R^d$
%$$
%\nu(E)=0 \hbox{ if }\cp(E)=0\quad \hbox{and}\quad 
%\nu(E)=0 \hbox{ if } \cp(E\cap E_0)=0.
%$$
%%which is absolutely continuous with respect to $\cp$ (i.e. $\cp(E)=0\Rightarrow\nu(E)=0$) and which is concentrated on $E_0$ (i.e. $\nu(E)=0$ if $\cp(E\cap E_0)=0$). 
%Notice that we do not assume $\nu$ to be nonnegative but we assume $0<|\nu(E_0)|<\infty$.
%Let $\eps>0$ and define $\mu_\eps=\mu+\eps\nu$. 
%The measure $\mu_\eps$ is absolutely continuous with respect to $\cp$ and for every for $\eps<\mu(E_0)/|\nu(E_0)|$ is nonnegative, and hence is a capacitary measure.
It is easy to verify that, as $\eps\to0$,
$$
\|\cdot \|_{H^1_{\mu_\eps}}\overset{\Gamma}{\to}\|\cdot \|_{H^1_{\mu}},\quad \hbox{ on } L^2(\R^d),
$$
and therefore we have $\mu_\eps\overset{\gamma}{\to} \mu$ and $w_\eps\to w$ in $L^{1}(\R^d)$, as $\eps\to 0$.
Let us denote by $t(\eps)$, $l(\eps)$ and $f_q(\eps)$ the real functions
$$
\eps\mapsto t(\eps)=T(\mu_\eps),\quad \eps\mapsto l(\epsilon)=\lambda(\mu_\eps),\quad \eps\mapsto f_q(\eps)=F_q(\mu_\eps),
$$ 
and by $t'_+(0)$, $l'_+(0)$, $(f_q)'_+(0)$ the limits for $\eps\to 0$ of the respective different quotients.

By writing $w_\eps=w+\eps\xi_\eps$ for some $\xi_\eps\in L^{1}(\R^d)$ and using the fact that $w, w_\eps$ respectively weakly solve the PDEs:
$$-\Delta w+w\mu=1,$$
\be\label{wdue}
-\Delta w_\eps+w_\eps\mu_\eps=1,
\ee
we deduce that $\xi_\eps$ weakly solves the PDE
\be\label{wtre}
-\Delta\xi_\eps+\xi_\eps\mu_\eps=w\mu.
\ee
This allows us to compute the derivative
\[\begin{split}
t_+'(0)&=\lim_{\eps\to 0}\left(\int \xi_\eps\,dx\right)=\lim_{\eps\to 0}\left(\int \nabla w_{\eps}\nabla \xi_\eps dx +\int w_{\eps}\xi_\eps d\mu_\eps\right)\\
&=\lim_{\eps\to 0}\left(\int w w_\eps d\mu\right),
\end{split}\]
where we test \eqref{wdue} with $\xi_\eps$ and we use \eqref{wtre} tested with $w_\eps$. Since, as $\eps\to 0$, $w_\eps\to w$ in $L^1(\R^d)$ we obtain
\be\label{tprimo}
t_+'(0)=\int w^2\,d\mu.
\ee
We can treat with a similar argument the eigenvalue. Let $u, u_\eps$ be the first eigenfunctions (with unitary $L^2$ norm) respectively of the operator $-\Delta+\mu_\eps$ and $-\Delta+\mu$ and let $v_\eps\in L^{2}(\R^d)$ be such that $u_\eps=u+\eps v_\eps$.
Since 
$$-\Delta u+u\mu=\la(\mu)u,\quad -\Delta u_\eps+u_\eps\mu_\eps=\la(\mu_\eps)u_\eps$$
we have
$$
-\Delta v_\eps +v_\eps\mu-u\mu-\eps v_\eps\mu=\left(\frac{\la(\mu_\eps)-\la(\mu)}{\eps}\right) u+\la(\mu_\eps)v_\eps.
$$
By testing the PDE above with $u\in H^{1}_\mu$ and since $\int u^2dx=1$, we obtain
$$
\left(\frac{\la(\mu_\eps)-\la(\mu)}{\eps}\right)=\int \nabla v_\eps\nabla u\,dx
+\int v_\eps u\,d\mu-\int u^2\,d\mu-\eps \int v_\eps u\,d\mu-\la(\mu_\eps)\int v_\eps u\,dx.$$
By taking the limit as $\eps\to 0$ and exploiting the fact that $u_\eps\to u$ weakly in $H^{1}_\mu$ and $\la(\mu_\eps)\to \la(\mu)$ we get
\be\label{lprimo}
l_+'(0)=-\int u^2\,d\mu.
\ee
By combining \eqref{tprimo} and \eqref{lprimo} we get
\[\begin{split}
&(f_q)_+'(0)=l'_+(0)T^q(\mu)+q\lambda(\mu)T^{q-1}(\mu)t'_+(0)=F_q(\mu)\int\left(-\frac{u^2}{\lambda(\mu)}+q\frac{w^2}{T(\mu)}\right)\,d\mu.
\end{split}\]
Now, the optimality condition on $\mu$ implies $(f_q)'(0)\le 0$ and hence that
\be\label{dacontraddire}
\int\left(\frac{u^2}{\lambda(\mu)}-q\frac{w^2}{T(\mu)}\right)\,d\mu\ge 0.
\ee
We claim that 
\be\label{claim}
\frac{u^2}{\lambda(\mu)}-q\frac{w^2}{T(\mu)}< 0 \quad\hbox{q.e on }\R^d
\ee
%since the measure $\nu$ is arbitrary, we get
%\be\label{conseguenza}
%\frac{u^2}{\lambda(\mu)}=q\frac{w^2}{T(\mu)}\qquad\text{q.e. on }E_0.
%\ee
for $q$ large enough. Indeed, by an application of Theorem \ref{Davies} together with a comparison principle, we have
$$
u\leq e^{1/(8\pi)}\la^{d/4+1}(\mu)w \quad\hbox{q.e on }\R^d,
$$
and so by the P\'olya inequality
$$
u^2\le e^{1/(4\pi)}\la^{d/2}(\mu)\frac{\la(\mu)}{T(\mu)}w^2 \quad\hbox{q.e on }\R^d.
$$
The latter implies that
\[
\frac{u^2}{\lambda(\mu)}-q\frac{w^2}{T(\mu)}\leq \frac{w^2}{T(\mu)}\left(e^{1/(4\pi)}\la^{d/2}(\mu) -q \right) \quad\text{q.e. on }\R^d.
\]
%$$
%u^2\le e^{1/(4\pi)}\la^{d/2}(\mu)w^{2}\quad\text{q.e. on }E_0.
%$$
Therefore, for every $q$ such that
$$\sup_{\mu\in\M_{ad}}e^{1/(4\pi)}\la^{d/2}(\mu)<q,$$
\eqref{claim} is verified. Notice that the supremum in the inequality above is finite as a consequence of Lemma \ref{qtoinfty} combined again with P\'olya inequality.

To conclude it is now enough to notice that \eqref{claim} contradicts \eqref{dacontraddire}.
\end{proof}

%%%%%%%%%%%%%%%%%%%%%%%%%%%%%%%%%%%%%%%%%%%%%%%%%%
\section{Optimality for nearly spherical domains}\label{ssph}

In the following we consider the classes $\S_{\delta,\gamma}$ of {\it nearly spherical domains}. Let $B_1$ be the unitary ball of $\R^d$. A domain $\O$ such that
$$|\O|=|B_1|, \quad \int_\O xdx=0,$$
belongs to the class $\S_{\delta,\gamma}$ if there exists $\phi\in C^{2,\gamma}(\pa B_1)$ with $\|\phi\|_{L^{\infty}(\pa B_1)}\le 1/2$ and such that
$$\pa\O=\{x\in \R^d:\ x=(1+\phi(y))y,\ y\in\pa B_1\}, \quad \|\phi\|_{C^{2,\gamma}}(\pa B_1)\le \delta. $$
%\blue{Given $\phi:\pa B_1\to \R$ we set
%$$
%\|\phi\|^2_{H^{1/2}(\pa B_1)}=\int_{\pa B_1}\phi^2 d\HH^{d-1}+\int_{\R^d} |\nabla H(\phi)|^2dx
%$$
%where $H(\phi)$ denotes the $W^{1,2}$ Harmonic extension of $\phi$, i.e.
%$$
%\Delta H(\phi)=0\ \hbox{in }B_1,\quad H(\phi)=\phi \hbox{ on }\pa B_1
%$$}
We recall the following result.
% Theorem 3.3.\red{\bf in un unico teorema}

\begin{theo}\label{BraDePhiVel}
Let $\gamma\in (0,1)$. There exists $\delta=\delta(d,\gamma)>0$ such that if $\O\in \S_{\delta,\gamma}$ 
then 
\[
\begin{split}
&T(B_1)-T(\O)\ge C_1\|\phi\|^2_{H^{1/2}(\pa B_1)} \\
&\la(\O)-\la(B_1)\le C_2\|\phi\|^2_{H^{1/2}(\pa B_1) %(\omega(\|\phi\|_{W^{2,p}(B_1)})+C)\|\phi\|^2_{H^{1/2}(B_1)
}
\end{split}
\]
for suitable constants $C_1$ and $C_2$ depending only on the dimension $d$.
\end{theo}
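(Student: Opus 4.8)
The plan is to expand the functionals $\la(\cdot)$ and $T(\cdot)$ to second order around $B_1$ in the normal--graph parametrization, and to extract each of the two one-sided inequalities from the first two terms of the expansion. Write $\O=\O_\phi$ with $\pa\O_\phi=\{(1+\phi(y))y:\ y\in\pa B_1\}$, $\|\phi\|_{C^{2,\gamma}(\pa B_1)}\le\delta$, and keep the constraints $|\O_\phi|=|B_1|$, $\int_{\O_\phi}x\,dx=0$. Expanding them in $\phi$ one gets $\int_{\pa B_1}\phi\,d\HH^{d-1}=-\tfrac{d-1}{2}\int_{\pa B_1}\phi^2\,d\HH^{d-1}+O(\|\phi\|_{L^\infty}\|\phi\|^2_{L^2})$ and $|\int_{\pa B_1}\phi\,y_i\,d\HH^{d-1}|\le C\|\phi\|^2_{L^2}$, so on the constraint manifold the zeroth and first spherical modes of $\phi$ are quadratically small in $\|\phi\|_{L^2}\le\|\phi\|_{H^{1/2}(\pa B_1)}$. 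To transplant functions between $B_1$ and $\O_\phi$ I would use $\Psi_\phi(x)=(1+h_\phi(x))\,x$, with $h_\phi$ the harmonic extension of $\phi$ to $B_1$; the point of this choice is the sharp bound $\|D\Psi_\phi-\mathrm{Id}\|_{L^2(B_1)}\le C\|h_\phi\|_{H^1(B_1)}\le C\|\phi\|_{H^{1/2}(\pa B_1)}$, so that every error term that is at least quadratic in $D\Psi_\phi-\mathrm{Id}$ is automatically controlled by $\|\phi\|^2_{H^{1/2}}$. Since $\pa_\nu w_{B_1}$ and $\pa_\nu u_1$ are constant on $\pa B_1$, the Hadamard shape derivatives of $T$ and $\la$ along $x\mapsto h_\phi(x)x$ are multiples of $\int_{\pa B_1}\phi$, hence $O(\|\phi\|^2_{L^2})$; thus the ball is critical for both functionals on the constraint manifold and one has the expansions
$$\la(\O_\phi)=\la(B_1)+\tfrac12 Q_\la(\phi)+R_\la(\phi),\qquad T(\O_\phi)=T(B_1)+\tfrac12 Q_T(\phi)+R_T(\phi),$$
with $Q_\la,Q_T$ the constrained second variations at $B_1$ and $R_\la,R_T$ the remainders.

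For the eigenvalue inequality only the soft (boundedness) side is needed. Testing the Rayleigh quotient of $\O_\phi$ with $u_1\circ\Psi_\phi^{-1}\in H^1_0(\O_\phi)$, changing variables back to $B_1$ and expanding the resulting coefficients, one checks that $Q_\la$ is a bounded quadratic form on $H^{1/2}(\pa B_1)$ — it is built from $u_1$, $\pa_\nu u_1$ and the Dirichlet-to-Neumann (Steklov) operator of $B_1$, whose degree-$k$ eigenvalue is $k$ — so that $Q_\la(\phi)\le C\|\phi\|^2_{H^{1/2}}$; together with the remainder bound $|R_\la(\phi)|\le C\delta\|\phi\|^2_{H^{1/2}}$ discussed below, this gives $\la(\O_\phi)-\la(B_1)\le C_2\|\phi\|^2_{H^{1/2}(\pa B_1)}$.

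For the torsion inequality I would use the dual characterization $T(\O)^{-1}=\min\{\int_\O|\nabla u|^2\,dx:\ u\in H^1_0(\O),\ \int_\O u\,dx=1\}$: transplanting the normalized torsion function $w_{\O_\phi}/T(\O_\phi)$ back to $B_1$ through $\Psi_\phi$ and renormalizing its mean to $1$ produces a competitor for the $B_1$-problem, which makes $Q_T\le 0$ transparent (Saint--Venant), but for a \emph{quantitative} decay one must prove the coercivity $Q_T(\phi)\le -c\,\|\phi\|^2_{H^{1/2}(\pa B_1)}$ for all admissible $\phi$, with $c=c(d)>0$. I would obtain it by diagonalizing $Q_T$ in spherical harmonics $\phi=\sum_k\phi_k$: the mode $\phi_0$ is removed by the volume constraint, $\phi_1$ by the barycentre constraint together with the translation invariance of $T$, and on each degree-$k$ eigenspace with $k\ge 2$ the form $Q_T$ acts as a negative scalar whose modulus grows linearly in $k$ (again via the Steklov operator of $B_1$); since $\|\phi\|^2_{H^{1/2}(\pa B_1)}\asymp\sum_k(1+k)\|\phi_k\|^2_{L^2}$, this is exactly the asserted coercivity — the torsional counterpart of the second-variation computation of Brasco--De Philippis--Velichkov for $\la$. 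Combined with $|R_T(\phi)|\le C\delta\|\phi\|^2_{H^{1/2}}$ we then get
$$T(B_1)-T(\O_\phi)=-\tfrac12 Q_T(\phi)-R_T(\phi)\ge\Big(\tfrac c2-C\delta\Big)\|\phi\|^2_{H^{1/2}}\ge C_1\|\phi\|^2_{H^{1/2}(\pa B_1)}$$
once $\delta=\delta(d,\gamma)$ is small enough.

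The main obstacle is twofold. First, the explicit spherical-harmonic computation that simultaneously pins down the \emph{sign} of $Q_T$ (and of $Q_\la$) and its \emph{linear-in-$k$} growth — this growth is precisely what makes $H^{1/2}$, and not $L^2$ or $H^1$, the natural norm. Second, the uniform remainder estimate $|R_\la(\phi)|+|R_T(\phi)|\le C\delta\|\phi\|^2_{H^{1/2}}$: this cannot be obtained from abstract $C^2$-Taylor expansion, which only yields $o(\|\phi\|^2_{C^{2,\gamma}})$ and is useless because the $C^{2,\gamma}$-norm does not dominate the $H^{1/2}$-norm; one has to estimate the relevant integrals by hand, using the harmonic extension $h_\phi$ (bounded in $H^1(B_1)$ by $\|\phi\|_{H^{1/2}}$) for the transplantation together with the $C^{2,\gamma}$ a priori bound to absorb the Jacobians and the higher-order terms — and it is exactly this interplay that forces the smallness threshold $\delta=\delta(d,\gamma)$. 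The complete argument is carried out in the work of Brasco, De Philippis and Velichkov and its torsional analogue.
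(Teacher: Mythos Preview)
The paper does not prove this theorem: its entire proof is a two-line citation, attributing the torsion inequality to Theorem~3.3 of Brasco--De~Philippis--Velichkov \cite{BDV} and the eigenvalue inequality to Theorem~1.2 and Lemma~2.8 of Dambrine--Lamboley \cite{DL}. Your proposal, by contrast, sketches the actual mechanism behind those cited results --- second-order shape expansion at the ball, diagonalization of the second variation in spherical harmonics, linear-in-$k$ growth yielding $H^{1/2}$ as the natural norm, and the delicate $H^{1/2}$ remainder estimate via harmonic extension. This outline is faithful to the strategy of \cite{BDV} and is correct in spirit; it simply does much more work than the paper asks for.

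One small correction of attribution: you describe the torsion coercivity as ``the torsional counterpart of the second-variation computation of Brasco--De~Philippis--Velichkov for $\lambda$'', suggesting it lives elsewhere. In fact Theorem~3.3 of \cite{BDV} \emph{is} the nearly-spherical torsion estimate --- it is proved there as an intermediate step toward the sharp quantitative Faber--Krahn inequality, via exactly the spherical-harmonic diagonalization you outline. The eigenvalue upper bound, on the other hand, is not taken from \cite{BDV} in the paper but from \cite{DL}, whose framework gives continuity/boundedness of the second shape derivative in the $H^{1/2}$ norm together with the required control on the remainder; your direct Rayleigh-quotient argument for $Q_\lambda(\phi)\le C\|\phi\|_{H^{1/2}}^2$ is a legitimate shortcut for the soft direction, but the uniform remainder bound $|R_\lambda(\phi)|\le C\delta\|\phi\|_{H^{1/2}}^2$ still needs the machinery you allude to at the end.
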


%Combining some results of \cite{???} we state the following lemma.

%\begin{lemm}\label{DamLamb}
%Let $\gamma\in (0,1)$. There exists $\delta_2=(d,\gamma)$ and modulus of continuity $\omega$ such that for every domain $\O_\phi\in S_{\delta_2,\gamma}$ if $\|\phi\|_{C^{2,\gamma}(\pa B_1)}\le \delta_2$ then 
%$$
%\la(\O)-\la(B)\le (\omega(\|\phi\|_{W^{2,p}(B_1)})+C)\|\phi\|^2_{H^{1/2}(B_1)}%.
%$$
%\end{lemm}

\begin{proof}
The inequality for the torsional rigidity follows from Theorem 3.3 in \cite{BDV} while the inequality for the eigenvalue follows by combining Theorem 1.2 and Lemma 2.8 of \cite{DL}.
%There exists $\eta>0$, a modulus of continuity $\omega$ such that for every $\O_\phi$ nearly sherical domain with $\|\phi\|_{W^{2,p}}\le \eta$ it holds
%$$
%|\la(\O)-\la(B)-1/2\pa\la(B)(\phi,\phi)|\le \omega(\|\phi\|_{W^{1,2}(\pa B_1)})\|\phi\|_{H^{1/2}(\pa B_1)}
%$$
%where
%$$
%|\pa\la(B)(\phi,\phi)|\le C\|\phi\|^2_{H^{1/2}(\pa B_1)}.
%$$
%In particular 
\end{proof}

\begin{theo}\label{maxBall}
Let $\gamma\in (0,1)$. There exists $\delta>0$ and $q_1>1$ such that for every $q\ge q_1$ and every $\O\in \S_{\gamma,\delta}$ %with
%$$
%|\O|=|B_1|, \quad \frac{1}{|\O|}\int_\O xdx=0,
%$$
it holds
$$
\la(B_1)T^q(B_1)\ge \la(\O)T^{q}(\O).
$$
\end{theo}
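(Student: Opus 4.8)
The plan is to compare $F_q(\Omega)$ with $F_q(B_1)$ using the quantitative estimates of Theorem \ref{BraDePhiVel}, which give one-sided control of both $T(B_1)-T(\Omega)$ and $\lambda(\Omega)-\lambda(B_1)$ in terms of $\|\phi\|^2_{H^{1/2}(\partial B_1)}$. Since a domain in $\mathcal{S}_{\delta,\gamma}$ already has $|\Omega|=|B_1|$, the functional reduces to $F_q(\Omega)=\lambda(\Omega)T^q(\Omega)/|B_1|^{\alpha_q}$, so it suffices to prove $\lambda(\Omega)T^q(\Omega)\le\lambda(B_1)T^q(B_1)$. First I would set $s=\|\phi\|^2_{H^{1/2}(\partial B_1)}$ and record that for $\delta$ small (hence $s$ small) Theorem \ref{BraDePhiVel} gives $T(\Omega)\le T(B_1)-C_1 s$ and $\lambda(\Omega)\le\lambda(B_1)+C_2 s$, while trivially $T(\Omega)>0$ and $\lambda(\Omega)>0$.

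The heart of the argument is then the elementary estimate: for $q$ large the gain from the torsion term, which enters with power $q$, beats the loss from the eigenvalue term, which enters linearly. Concretely I would write
$$
\lambda(\Omega)T^q(\Omega)\le\big(\lambda(B_1)+C_2 s\big)\big(T(B_1)-C_1 s\big)^q,
$$
expand $\big(T(B_1)-C_1 s\big)^q\le T^q(B_1)\big(1-\tfrac{C_1 s}{T(B_1)}\big)^q\le T^q(B_1)\,e^{-qC_1 s/T(B_1)}$, and compare with $\lambda(B_1)T^q(B_1)$. After dividing by $\lambda(B_1)T^q(B_1)$ the claim becomes
$$
\Big(1+\frac{C_2}{\lambda(B_1)}s\Big)e^{-qC_1 s/T(B_1)}\le 1,
$$
and using $1+x\le e^x$ it is enough that $\frac{C_2}{\lambda(B_1)}s\le \frac{qC_1}{T(B_1)}s$, i.e. $q\ge q_1:=\frac{C_2 T(B_1)}{C_1\lambda(B_1)}$, which is a purely dimensional constant. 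Choosing $\delta=\delta(d,\gamma)$ as in Theorem \ref{BraDePhiVel} and this $q_1$, the inequality holds for all $\Omega\in\mathcal{S}_{\delta,\gamma}$ and all $q\ge q_1$, with equality forced only when $s=0$, i.e. $\Omega=B_1$.

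A small technical point to handle carefully: Theorem \ref{BraDePhiVel} is stated for $\delta$ small, so one must first fix $\delta$ so small that $C_1 s< T(B_1)$ (guaranteeing the base of the power stays positive) and that the linearizations are valid; since $\|\phi\|_{H^{1/2}(\partial B_1)}\lesssim\|\phi\|_{C^{2,\gamma}(\partial B_1)}\le\delta$, this is automatic for $\delta$ small. I would also note that the threshold $q_1$ does not depend on $\delta$, so one may shrink $\delta$ freely after fixing $q_1$. The main obstacle — or rather the only subtlety — is making sure the one-sided bounds of Theorem \ref{BraDePhiVel} point the right way after raising $T$ to the power $q$: the sign works out precisely because we \emph{decrease} $T$ and \emph{increase} $\lambda$, so both deviations from the ball push $F_q$ down once $q$ is large enough; there is no genuine analytic difficulty beyond bookkeeping the constants. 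If one wanted a cleaner write-up, one could instead argue via logarithms: $\log F_q(\Omega)-\log F_q(B_1)\le \log\big(1+\tfrac{C_2}{\lambda(B_1)}s\big)-q\log\big(\tfrac{T(B_1)}{T(\Omega)}\big)\le \tfrac{C_2}{\lambda(B_1)}s-q\tfrac{C_1}{T(B_1)}s\le 0$ for $q\ge q_1$, using $\log(1+x)\le x$ and $\log(1/(1-y))\ge y$.
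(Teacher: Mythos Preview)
Your proof is correct and follows essentially the same strategy as the paper: invoke Theorem~\ref{BraDePhiVel} to bound $T(B_1)-T(\O)\ge C_1 s$ and $\la(\O)-\la(B_1)\le C_2 s$ with $s=\|\phi\|^2_{H^{1/2}(\partial B_1)}$, and then show by an elementary inequality that for $q$ large the gain from the $q$-th power of the torsion dominates the linear loss in the eigenvalue. The only cosmetic difference is the elementary step: the paper linearizes via the tangent-line inequality $x^q-y^q\ge qy^{q-1}(x-y)$ to get
\[
\la(B_1)T^q(B_1)-\la(\O)T^q(\O)\ge T^{q-1}(\O)\big[q(T(B_1)-T(\O))-T(\O)(\la(\O)-\la(B_1))\big],
\]
together with the crude bound $T(\O)\le 2^{d+2}T(B_1)$, arriving at the threshold $q\ge 2^{d+2}\tfrac{C_2}{C_1}T(B_1)$; you instead pass through the exponential bounds $1-x\le e^{-x}$ and $1+x\le e^x$, which yields the sharper dimensional threshold $q_1=\tfrac{C_2 T(B_1)}{C_1\la(B_1)}$. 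Both routes are equally short and require no further ideas.
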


\begin{proof}
For every domain $\O$ we have
$$
\la(B_1)T^{q}(B_1)-\la(\O)T^{q}(\O)=\la(B_1)(T^{q}(B_1)-T^{q}(\O))+T^{q}(\O)(\la(B_1)-\la(\O)),
$$
which, by the elementary inequality
$$
x^{q}-y^{q}\ge q y^{q-1}(y-x), \quad \hbox{for every }x,y\ge0,\ q>1,
$$
implies
\be\label{dallaconvessita}
\la(B_1)T^{q}(B_1)-\la(\O)T^{q}(\O)\ge T^{q-1}(\O)[q(T(B_1)-T(\O))-T(\O)(\la(\O)-\la(B_1))].
\ee

Let $\delta$ the constant determined by Theorem \ref{BraDePhiVel} and assume $\O\in \S_{\gamma,\delta}$. Since $2^{-1}B_1\subset\O\subset 2B_1$, we get $$2^{-(2+d)}T(B_1)\le T(\O)\le 2^{2+d}T(B_1).$$
Combining Theorem \ref{BraDePhiVel} and inequality \eqref{dallaconvessita} we get
$$\la(B_1)T^{q}(B_1)-\la(\O)T^{q}(\O)\ge (2^{-(2+d)}T(B_1))^{q-1}(qC_1- 2^{2+d}C_2T(B_1))\|\phi\|^2_{H^{1/2}(\pa B_1)}.$$
Hence, if $q$ is such that
$$q\ge2^{d+2}\frac{C_2}{C_1}T(B_1),$$
we obtain 
$$\la(B_1)T^{q}(B_1)\ge \la(\O)T^{q}(\O)$$
and this concludes the proof.
\end{proof}

\begin{rema}
Although for large $q$ we expect the ball to be optimal for the functional $F_q$, it is easy to see that this does not occur when $q$ approaches $1$. Indeed, if the ball maximizes $F_q$ for every $q>1$, passing to the limit as $q\to1$, this would happen also for $q=1$, which is not true, even in the class of convex domains. To see this it is enough to notice that
$$F_1(B_1)=\frac{\lambda(B_1)}{d(d+2)}\le\frac{d+4}{2(d+2)},$$
where the last inequality follows simply by taking $u(x)=1-|x|^2$ as a test function for $\lambda(B_1)$. On the other hand, taking as $\O_\eps$ the thin slab $]0,1[^{d-1}\times]0,\eps[$, gives
$$\lim_{\eps\to0}F_1(\O_\eps)=\frac{\pi^2}{12}$$
and
$$\frac{\pi^2}{12}>\frac{d+4}{2(d+2)}\qquad\text{for every }d\ge2.$$
\end{rema}

%%%%%%%%%%%%%%%%%%%%%%%%%%%%%%%%%%%%%%%%%%%%%%%%%%
\bigskip

\noindent{\bf Acknowledgments.} SG gratefully acknowledges Professor Dorin Bucur for the stimulating discussions, and the hospitality, during a research period, at Universit\'e de Savoie-Mont Blanc. The work of GB is part of the project 2017TEXA3H {\it``Gradient flows, Optimal Transport and Metric Measure Structures''} funded by the Italian Ministry of Research and University. The authors are members of the Gruppo Nazionale per l'Analisi Matematica, la Probabilit\`a e le loro Applicazioni (GNAMPA) of the Istituto Nazionale di Alta Matematica (INdAM).

\bigskip

\bigskip
{\small\noindent
Luca Briani:\\
Dipartimento di Matematica, Universit\`a di Pisa\\
Largo B. Pontecorvo 5, 56127 Pisa - ITALY\\
{\tt luca.briani@phd.unipi.it}

\bigskip
\small\noindent
Giuseppe Buttazzo:\\
Dipartimento di Matematica, Universit\`a di Pisa\\
Largo B. Pontecorvo 5, 56127 Pisa - ITALY\\
{\tt giuseppe.buttazzo@dm.unipi.it}\\
{\tt http://www.dm.unipi.it/pages/buttazzo/}

\bigskip
\small\noindent
Serena Guarino Lo Bianco:\\
Dipartimento di Scienze F.I.M., Universit\`a degli Studi di Modena e Reggio Emilia\\
Via Campi 213/A, 41125 Modena - ITALY\\
{\tt serena.guarinolobianco@unimore.it}\\
%{\tt https://sites.google.com/site/serenaguarinolobianco/home}

\end{document}